\numberwithin{equation}{section}
\theoremstyle{plain}
\newtheorem{theorem}{Theorem}[section]
\newtheorem*{cor-acyclic}{Corollary~\ref{cor:acyclic}}
\newtheorem{proposition}[theorem]{Proposition}
\newtheorem{lemma}[theorem]{Lemma}
\newtheorem{claim}[theorem]{Claim}
\theoremstyle{definition}
\theoremstyle{plain}
\begin{document}

\title{Decomposing planar graphs into graphs with degree restrictions}

\author{\small Eun-Kyung Cho\thanks{
Department of Mathematics, Hankuk University of Foreign Studies, Yongin-si, Gyeonggi-do, Republic of Korea. \texttt{ekcho2020@gmail.com}
},  \  \
\small Ilkyoo Choi\thanks{
Department of Mathematics, Hankuk University of Foreign Studies, Yongin-si, Gyeonggi-do, Republic of Korea.
\texttt{ilkyoo@hufs.ac.kr}
},  \  \
\small Ringi Kim\thanks{
Department of Mathematical Sciences, KAIST, Daejeon, Republic of Korea.
\texttt{kimrg@kaist.ac.kr}},  \  \
 \small Boram Park\thanks{
Department of Mathematics, Ajou University, Suwon-si, Gyeonggi-do, Republic of Korea.
\texttt{borampark@ajou.ac.kr}}
,  \  \
\small Tingting Shan\thanks{
Department of Mathematics, Zhejiang Normal University, China.
\texttt{15735291101@163.com}
},  \  \
\small Xuding Zhu\thanks{
Department of Mathematics, Zhejiang Normal University, China.
\texttt{xdzhu@zjnu.edu.cn}
}}
\date\today
\maketitle
\begin{abstract}
Given a graph $G$, a   decomposition of $G$ is a partition of its edges.
A graph is   $(d, h)$-decomposable if its edge set can be partitioned into a $d$-degenerate graph and a graph with maximum degree at most $h$. For $d \le 4$, we are interested in the minimum integer $h_d$ such that 
every planar graph is $(d,h_d)$-decomposable.  
It was known that $h_3 \le 4$ and $h_2\le 8$ and $h_1 = \infty$. This paper proves that $h_4=1, h_3=2$ and $4 \le h_2 \le 6$.   
\end{abstract}

\section{Introduction}

We consider only finite simple graphs.
Given a graph $G$, a {\it decomposition} of $G$ is a collection of spanning subgraphs $H_1,\ldots, H_t$ such that each edge of $G$ is an edge of $H_i$ for exactly one $i\in\{1, \ldots, t\}$.
In other words, $E(H_1), \ldots,E(H_t)$ is a partition of $E(G)$.

A graph is {\it $d$-degenerate} if every subgraph has a vertex of degree at most $d$.
Given non-negative integers $d$ and $ h$, a {\it $(d,h)$-decomposition} of a graph $G$ is a decomposition $H_1, H_2$ of $G$ such that $H_1$ is $d$-degenerate and $H_2$ has maximum degree at most $h$. We say $G$ is 
{\it $(d,h)$-decomposable} if there exists a $(d,h)$-decomposition of $G$. This paper studies $(d,h)$-decomposability of planar graphs. 

Decomposing a  graph into subgraphs with simpler structure is a fundamental problem 
in graph theory. The classical Nash-Williams Arboricity Theorem~\cite{nash1964decomposition} (see also~\cite{nash1961edge,tutte1961problem}) gives a necessary and sufficient condition under which a graph can  be decomposed into $k$ forests. The Nine-Dragon Tree Conjecture \cite{montassier2012decomposing}, confirmed by Jiang and Yang \cite{jiang2017decomposing}, gives a sharp density condition under which a graph can be decomposed into $k$ forests
with one of them having bounded maximum degree. The page number of a graph $G$ is the minimum $k$ such that $G$ can be decomposed into $k$ planar graphs. A proper edge colouring of $G$ is a decomposition of $G$ into matchings.  The problem of decomposing a graph $G$ into star forests, linear forests, graphs of bounded maximum degree, etc., are studied extensively in the literature. 

The concept of $(d,h)$-decomposition has not been 
  defined formally in the literature (as to our knowledge). 
  However, such decompositions raise naturally in the study of 
  many  problems.  
  For example, it was proved in \cite{guan1999game} that if a graph $G$ is $(1,h)$-decomposable, then $G$ 
has game chromatic number   $\chi_g(G) $ at most $  4+h$. It was shown in \cite{guan1999game}
that outerplanar graphs are  $(1,3)$-decomposable, and hence have game chromatic number at most $7$. 
A  result in \cite{2000Zhu} implies that planar graphs are $(2,8)$-decomposable, and such a decomposition (with some more structure constraints) was used to show that planar graphs have game chromatic number at most $19$ (the currently best known upper bound for the game chromatic number of planar graphs is $17$ \cite{zhu2008refined}). A similar decomposition  were used to derive upper bound on the game chromatic number of graphs $G$   embeddable on an orientable surface of genus $g \ge 1$, namely, $\chi_g(G) \le \lfloor \frac 12(3\sqrt{1+48g)} +23) \rfloor$.  
It is known that
if $G$ decomposes into $H_1, H_2, \ldots, H_k$, then the spectral radius   of $G$ is bounded by the summation of the spectral radius of $H_i$, i.e., $\rho(G) \le \rho(H_1)+\rho(H_2)+\cdots + \rho(H_k)$ \cite{weyl1912asymptotische, dvovrak2010spectral} .
The currently best known upper bounds on the spectral radius of planar graphs (namely, $\rho(G) \le \sqrt{8\Delta - 16}+3.47$)   was obtained by Dvo\v{r}\'{a}k and Mohar
 \cite{dvovrak2010spectral} by applying the result  that every planar graph $G$ decomposes into $H_1, H_2$, with 
 $H_1$ has an orientation of maximum out-degree $2$, and $H_2$ has maximum degree at most $4$.

In this paper,   we are interested in the minimum integer $h_d$ such that 
every planar graph is $(d,h_d)$-decomposable. Since every planar graph is $5$-degenerate, the problem is interesting only for $d \le 4$. As observed above, a result in \cite{2000Zhu} implies that every planar graph is $(2,8)$-decomposable, i.e., $h_2 \le 8$.  A result in~\cite{gonccalves2009covering}  implies that every planar graph is $(3, 4)$-decomposable, i.e., $h_3 \le 4$. In this paper, we prove the following results:

\begin{theorem}\label{thm:4DplusM}
Every planar graph is $(4,1)$-decomposable.  
\end{theorem}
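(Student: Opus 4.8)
The plan is to argue by contradiction with the discharging method. Among all planar graphs that are not $(4,1)$-decomposable, let $G$ be one with the fewest vertices. Since adding an edge to a plane graph keeps it plane, and since a $(4,1)$-decomposition restricts to subgraphs (a subgraph of a $4$-degenerate graph is $4$-degenerate, and a subgraph of a matching is a matching), any plane triangulation on $V(G)$ containing $G$ is also a counterexample on the minimum number of vertices; so we may assume $G$ is a triangulation, and hence $3$-connected. The goal is to exhibit a reducible configuration in $G$, contradicting minimality.

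First come the routine structural reductions. If $v$ has degree at most $4$, take a $(4,1)$-decomposition $(H_1,H_2)$ of $G-v$ (which exists by minimality), put $v$ last in a $4$-degeneracy order of $H_1$, and colour all edges at $v$ into $H_1$; the back-degree of $v$ is at most $4$, so this is a $(4,1)$-decomposition of $G$, a contradiction. Hence $\delta(G)\ge 5$. The remaining reductions all have the same flavour: delete a vertex $v$ of degree $5$ (optionally re-triangulating the resulting $5$-face with a couple of chords), take a $(4,1)$-decomposition of the smaller triangulation, delete the added chords, and re-insert $v$ with four of its edges in the $4$-degenerate part $H_1$ and the fifth edge $vu$ into the matching $H_2$, placing $v$ appropriately in the degeneracy order. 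This succeeds immediately whenever some neighbour of $v$ is left $H_2$-unsaturated. When every neighbour of $v$ is $H_2$-saturated one must first \emph{recolour}: move the $H_2$-edge $uw$ at a suitable neighbour $u$ into $H_1$ (which is legal precisely when $w$ has back-degree slack in $H_1$), thereby freeing $u$. Keeping track of this forces a case analysis on the degrees of the neighbours of $v$, and — as usual — on the degrees of their neighbours; the deleted chords in the re-triangulation step are exactly what provide the extra slack. The outcome is a finite list of reducible configurations, each a degree-$5$ vertex together with some low-degree vertices nearby.

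I expect the main obstacle to be precisely the matching condition in these re-insertions: the naive extension breaks only through a global constraint (``all neighbours of $v$ are already matched''), so the reducibility proofs must carefully juggle which edge is recoloured and verify that no back-degree is pushed above $4$. Designing the reducible list so that it is simultaneously (a) genuinely reducible under this bookkeeping and (b) discharging-complete is the real work.

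The final step is the discharging itself. Assign to each vertex $x$ the charge $\mu(x)=\deg(x)-6$ and to each (triangular) face the charge $\mu(f)=2\deg(f)-6=0$; by Euler's formula $\sum_x\mu(x)=-12<0$, and only degree-$5$ vertices carry negative charge. With an appropriate rule redistributing charge from vertices of degree at least $7$ (and, in a controlled way, from degree-$6$ vertices) towards nearby degree-$5$ vertices, one shows that if $G$ avoids every configuration on the reducible list then every vertex ends with non-negative charge, contradicting $\sum_x\mu(x)=-12$. Hence $G$ contains a reducible configuration, which is the desired contradiction and proves that every planar graph is $(4,1)$-decomposable.
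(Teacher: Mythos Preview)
Your outline has the right skeleton --- minimum counterexample, reduction to a triangulation, reducible configurations around degree-$5$ vertices, discharging with $\mu(v)=\deg(v)-6$ --- and this is exactly the paper's framework. But what you have written is a plan, not a proof: you never produce the list of reducible configurations, never verify reducibility for any of them, never state a discharging rule, and never check non-negativity. You yourself call this ``the real work'', and none of it is done here. As written, the same outline would equally well be a plan to prove that every planar graph is $(4,0)$-decomposable, which is false; the content that distinguishes the true statement from the false one is precisely what is missing.

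Your proposed reducibility mechanism --- delete only the degree-$5$ vertex $v$, re-triangulate the $5$-face, decompose, remove the added chords, then \emph{recolour} an $H_2$-edge at a saturated neighbour to free a matching slot --- is also more delicate than necessary and not what the paper does. The paper deletes $v$ \emph{together with} several of its neighbours (a set $S$ of size $2$, $4$, or $6$), applies induction to $G-S$ with no re-triangulation, places the new matching edges entirely inside $S$, and appends $S$ to the degeneracy order in a specified sequence. Because the new matching edges lie inside $S$, there is no saturation conflict and no recolouring is ever needed. Concretely, the reducible list is: a $4^-$-vertex; two adjacent $5$-vertices; a $5$-vertex with three consecutive $6^-$-neighbours; a $5$-vertex with two $7$-neighbours and three $6^-$-neighbours; and a $7$-vertex with three consecutive $6^-$-neighbours two of which are $5$-vertices. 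The single discharging rule sends $(\deg(v)-6)/d_5(v)$ from each $6^+$-vertex $v$ to each of its $5$-neighbours (so degree-$6$ vertices send nothing, contrary to your suggestion); with the listed configurations excluded, every $5$-vertex ends with non-negative charge in a few lines.
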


\begin{theorem}\label{thm:3Dplus2}
Every planar graph is $(3,2)$-decomposablee.
\end{theorem}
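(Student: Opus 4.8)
Let me think about how to prove that every planar graph decomposes into a 3-degenerate graph $H_1$ and a graph $H_2$ with $\Delta(H_2) \le 2$ (i.e., $H_2$ is a disjoint union of paths and cycles).

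The standard approach for such decomposition results on planar graphs is discharging combined with a minimal counterexample argument. We'd take a minimal counterexample $G$ (minimizing $|V(G)| + |E(G)|$), show it must be 2-connected or has no small configurations, and derive a contradiction via discharging on the planar embedding.

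Key idea: We want to build $H_1$ and $H_2$ simultaneously. Actually, a cleaner reformulation: We want to orient/partition edges. A graph is $(3,2)$-decomposable iff we can choose $H_2 \subseteq G$ with $\Delta(H_2) \le 2$ such that $G - E(H_2)$ is 3-degenerate.

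Here's my plan:

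1. **Reformulation via a combined degeneracy-type condition.** Instead of working with "$H_1$ is 3-degenerate" directly (which is a global subgraph condition), use an ordering characterization: $H_1$ is 3-degenerate iff there's a linear order $v_1, v_2, \ldots, v_n$ of $V(G)$ such that each $v_i$ has at most 3 neighbors in $H_1$ among $\{v_1, \ldots, v_{i-1}\}$. So we want an ordering and an assignment of each edge to $H_1$ or $H_2$ such that:
   - each vertex has $\le 3$ back-neighbors in $H_1$,
   - each vertex has $\le 2$ neighbors total in $H_2$ (not just back-neighbors — $H_2$'s degree condition is global).

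2. **Minimal counterexample + reducible configurations.** Let $G$ be a minimal counterexample. We show $G$ cannot contain certain configurations: e.g., a vertex of degree $\le 3$ (delete it, decompose $G - v$, then add $v$ with all its edges to $H_1$ — it has $\le 3$ neighbors so 3-degeneracy is preserved, putting $v$ last in the order). More generally, a vertex $v$ of degree $4$ with... we'd need to put one edge at $v$ into $H_2$, but that edge's other endpoint might already have $H_2$-degree $2$. This is where it gets delicate. We'd need configurations like: degree-4 vertices adjacent to low-degree vertices, two adjacent degree-4 vertices, degree-5 vertices with enough low-degree neighbors, etc.

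3. **Discharging.** Assign charge $\deg(v) - 4$ to each vertex and $\deg(f) - 4$ to each face (wait — with Euler's formula $\sum (\deg v - 4) + \sum(\deg f - 4) = -8$, so total negative charge). Design discharging rules sending charge from high-degree vertices and large faces to low-degree vertices/triangles. Then show every vertex and face ends with nonnegative charge, contradiction. The rules must be calibrated so that the absence of reducible configurations forces nonnegativity.

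The main obstacle: **the global nature of the $\Delta(H_2) \le 2$ constraint.** When we contract or delete a configuration and extend the decomposition, the two new edges we want to route into $H_2$ at a reducible vertex might conflict with $H_2$-edges already chosen at their far endpoints. Handling this requires either (a) choosing reducible configurations carefully so the "far endpoints" are themselves low-degree and controllable, or (b) a more clever extension argument that can *reroute* existing $H_2$-edges. Option (b) — an augmenting-path-style argument along $H_2$ (which is a union of paths/cycles, so very structured) — is probably the right tool: if the far endpoint already has $H_2$-degree 2, we can swap one of its $H_2$-edges to $H_1$ provided doing so doesn't break 3-degeneracy, and chase this down the path. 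Making this rerouting terminate and not cascade badly is the technical heart. I'd expect the proof to combine a small set of reducible configurations (all involving vertices of degree $\le 7$ or so, with specified low-degree neighborhoods) with such a rerouting lemma, then close with discharging. I'd also guess the authors prove a slightly stronger statement with extra structural control on $H_2$ (e.g., $H_2$ a linear forest, or a matching-plus-structure) to make the induction go through — analogous to how Theorem \ref{thm:4DplusM} presumably gives $H_2$ a matching.
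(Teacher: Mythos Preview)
Your proposal is a plan rather than a proof, and it diverges sharply from what the paper actually does. The paper does \emph{not} use discharging for Theorem~\ref{thm:3Dplus2} (discharging is reserved for the $(4,1)$ result). Instead it proves a stronger technical statement (Theorem~\ref{thm:32}) by induction on $|V(G)|$ over near triangulations, in the style of Thomassen's 5-list-colouring proof: one fixes a boundary edge $xy$ and a special boundary vertex $z$, and proves that a $(3,2)$-decomposition exists with explicit extra constraints on the boundary vertices (e.g.\ $\deg_D^+(w)\le 2$ and $\deg_H(w)\le 2$ for boundary $w$, with $\deg_D^+(w)+\deg_H(w)\le 3$; and $\deg_H(x)=\deg_H(y)=0$, $N_D^+(x)=\{y\}$, $\deg_D^+(y)=0$). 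These boundary constraints are exactly what allows the inductive gluing along chords and across the deletion of $z$, and they replace any need for rerouting or augmenting paths: when two pieces are merged along a boundary, the $H$-degrees at the shared vertices are already forced to be zero or small enough that no conflict arises. Your intuition that ``the authors prove a slightly stronger statement with extra structural control'' is correct, but the control is on \emph{boundary} behaviour in a plane embedding, not on $H_2$ being a linear forest.

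As for your own route: you correctly identify the obstacle (the global $\Delta(H_2)\le 2$ constraint blocks na\"ive extension at a low-degree vertex), but the proposed fix---an augmenting-path swap along $H_2$---is not developed, and it is not clear it terminates without destroying 3-degeneracy somewhere down the path. You have also not specified any reducible configurations beyond degree $\le 3$, nor any discharging rules, so there is no argument yet that the configuration list is unavoidable. In short, the sketch has real gaps that would require substantial new ideas to close, whereas the paper sidesteps all of this by loading the induction hypothesis with boundary data.
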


Since planar graphs of minimum degree $5$  is neither $(3,1)$-decomposble nor $(4,0)$-decomposable, 
we conclude that $h_4=1$ and $h_3=2$.

\begin{theorem}\label{thm:2Dplus6}
Every planar graph is $(2,6)$-decomposable.  
\end{theorem}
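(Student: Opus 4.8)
The plan is to derive Theorem~\ref{thm:2Dplus6} from Theorem~\ref{thm:3Dplus2} by a short edge-recolouring argument, rather than from scratch. Apply Theorem~\ref{thm:3Dplus2} to obtain a decomposition $H_1,H_2$ of $G$ with $H_1$ being $3$-degenerate and $\Delta(H_2)\le 2$; note that $H_1$ is a spanning subgraph of $G$ and is therefore planar. Fix a degeneracy ordering $v_1,\dots,v_n$ of $H_1$, and for each $i$ let $B(v_i)$ be the set of neighbours of $v_i$ in $H_1$ preceding $v_i$, so $|B(v_i)|\le 3$. Call $v_i$ \emph{heavy} if $|B(v_i)|=3$, and let $W$ be the set of heavy vertices. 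We will move, from $H_1$ to $H_2$, exactly one edge $v_iu$ with $u\in B(v_i)$ for each heavy $v_i$. After all such moves every vertex has at most two back-neighbours with respect to the fixed ordering, so the modified $H_1$ is $2$-degenerate; thus it only remains to choose the moved edges so that the modified $H_2$ still has maximum degree at most $6$.

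Choosing which edge to move amounts to picking, for each $w\in W$, a vertex $\phi(w)\in B(w)$ subject to the constraint $|\phi^{-1}(v)|\le 3$ for every vertex $v$. Indeed, a vertex $v$ then receives at most $|\phi^{-1}(v)|\le 3$ moved edges as an endpoint of the form $\phi(w)$, plus at most one moved edge as the ``source'' endpoint (only if $v\in W$), so its degree in $H_2$ increases by at most $4$, to at most $2+4=6$. By the deficiency form of Hall's theorem (equivalently, by max-flow--min-cut on the natural bipartite network with left set $W$, right set $V(G)$, right capacities $3$, and edges $wu$ for $u\in B(w)$), such a map $\phi$ exists if and only if $3\,|N^-(S)|\ge|S|$ for every $S\subseteq W$, where $N^-(S)=\bigcup_{w\in S}B(w)$.

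To verify this inequality, fix a nonempty $S\subseteq W$ (the empty case is trivial) and set $T=N^-(S)$; since each $w\in S$ has $|B(w)|=3$ we have $|T|\ge 3$, and by definition every back-edge of every $w\in S$ lands in $T$. For $w\in S\setminus T$, the three back-edges of $w$ join the disjoint vertex sets $S\setminus T$ and $T$, and back-edges of distinct such $w$'s are distinct edges (if two heavy vertices shared such an edge, one of them would be a back-neighbour of the other and hence lie in $T$). Therefore the bipartite subgraph of $H_1$ between $S\setminus T$ and $T$ is simple and planar with at least $3\,|S\setminus T|$ edges, so $3\,|S\setminus T|\le 2(|S\setminus T|+|T|)-4$, i.e. $|S\setminus T|\le 2|T|-4$. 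Combining with $|S\cap T|\le|T|$ yields $|S|\le 3|T|-4< 3\,|N^-(S)|$, as needed. This finishes the deduction.

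\textbf{Main obstacle.} In this approach there is no genuinely hard step; the point requiring care is the accounting in the second paragraph together with the realisation that planarity of $H_1$ is exactly what forces the Hall-type inequality --- an arbitrary $3$-degenerate graph would not work, since the argument uses the bound $m\le 2n-4$ for bipartite planar graphs. A self-contained alternative would be a discharging argument on a minimal counterexample: delete a vertex of small degree, apply induction to the smaller graph, and re-route the incident edges between the two parts, such configurations being reducible unless too many neighbours are ``blue-saturated''. That route avoids Theorem~\ref{thm:3Dplus2} but demands a lengthy case analysis of saturated neighbourhoods and local recolourings, which is why the reduction above seems preferable.
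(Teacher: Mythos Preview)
Your argument is correct, and it is genuinely different from the route the paper takes. The paper does \emph{not} deduce Theorem~\ref{thm:2Dplus6} from Theorem~\ref{thm:3Dplus2}; instead it proves each of the two theorems independently, in each case by strengthening the statement to a more technical claim about near triangulations with prescribed behaviour on the boundary (Theorems~\ref{thm:2Dplus6:main} and~\ref{thm:32}) and running an induction with a fairly intricate case analysis on chords of the outer cycle. Your reduction bypasses the entire inductive proof of Theorem~\ref{thm:2Dplus6:main}: once the $(3,2)$-decomposition is in hand, a single application of defect Hall together with the bipartite planar bound $m\le 2n-4$ finishes the job. The trade-off is that your proof is not self-contained for Theorem~\ref{thm:2Dplus6} --- it imports Theorem~\ref{thm:3Dplus2}, whose own proof still carries a (shorter) case analysis --- whereas the paper's two technical theorems are logically independent and in fact give extra boundary control that your reduction does not. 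Two small remarks on the write-up: the bipartite planar inequality you invoke needs $n\ge 3$, which you have since $|T|\ge 3$ whenever $S\neq\emptyset$ (and the case $S\setminus T=\emptyset$ is trivial anyway); and it may be worth saying explicitly that the moved edges are pairwise distinct because a back-edge is attributed to its later endpoint only, so the degree bookkeeping in $H_2$ is clean.
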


\begin{proposition}\label{prop:(2,3)}
Not all planar graphs are $(2,3)$-decomposable.
\end{proposition}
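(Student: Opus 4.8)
The idea is a counting argument. Suppose $G$ is a planar graph with a $(2,3)$-decomposition $H_1, H_2$, where $H_1$ is 2-degenerate and $\Delta(H_2) \le 3$. Then $|E(H_1)| \le 2|V(G)| - 3$ (since a 2-degenerate graph on $n$ vertices has at most $2n-3$ edges) and $|E(H_2)| \le \frac{3}{2}|V(G)|$. Hence $|E(G)| \le 2|V(G)| - 3 + \frac{3}{2}|V(G)| = \frac{7}{2}|V(G)| - 3$. So any planar graph $G$ with $|E(G)| > \frac{7}{2}|V(G)| - 3$ is not $(2,3)$-decomposable. The plan is to exhibit such a graph. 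Since planar graphs satisfy $|E(G)| \le 3|V(G)| - 6 < \frac{7}{2}|V(G)|$, a single global count is not enough — we need a local/structural obstruction, or an edge count on a cleverly chosen induced subgraph.

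I would look at planar triangulations with many vertices of degree 5. In a triangulation $T$, every edge lies in two triangles; if $T$ has minimum degree 5, then one can try to force a contradiction on a small neighborhood. Concretely, consider the icosahedron $I$: it is a 5-regular planar triangulation on $12$ vertices with $30$ edges, and $\frac{7}{2}\cdot 12 - 3 = 39$, so the crude count does not forbid it. So instead I would take many disjoint copies sharing structure, or pass to a denser example. A cleaner route: find a planar graph $G$ and a set $S \subseteq V(G)$ such that $G[S]$ forces too many edges into $H_1$ locally. If every vertex of $S$ has all its neighbors in $S$ and $G[S]$ has minimum degree at least $6$ (impossible for planar), that won't work directly — so the real argument must use the degeneracy order of $H_1$ together with the degree bound on $H_2$ more cleverly than a raw edge count.

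The sharper approach is this: in a $(2,3)$-decomposition, look at a vertex $v$ that appears last in a degeneracy ordering of $H_1$ restricted to a well-chosen subgraph. Every neighbor of $v$ in that subgraph is joined to $v$ either by an $H_1$-edge (at most $2$ of which are "backward") or an $H_2$-edge (at most $3$ total at $v$). So a vertex all of whose neighbors lie in a fixed dense set can have degree at most... this requires choosing the set so the bound is violated. The standard trick is to take $G$ to be a triangulation where a large set $S$ induces a subgraph in which \emph{every} subgraph has a vertex of degree $\ge 6$ within $S$ — then no $2$-degenerate subgraph can cover "most" edges, while $H_2$ with $\Delta \le 3$ cannot cover the rest. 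Planar triangulations with many internal degree-$5$ vertices, or iterated stacked/Kleetope-type constructions, should do it; for instance stacking a vertex into every face of the icosahedron repeatedly and then counting edges on the set of original-plus-first-generation vertices.

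**Expected main obstacle.** The crude edge count gives $\frac{7}{2}n - 3$, which exceeds the planar bound $3n-6$, so the proposition genuinely requires a \emph{local} witness rather than a global density argument. The hard part is engineering a planar graph together with a vertex subset $S$ on which the combination ``$H_1$ is $2$-degenerate on $S$'' (costing at most $2|S| - 3$ edges) plus ``$\Delta(H_2) \le 3$'' (costing at most $\frac{3}{2}|S|$ edges, or better, at most $\frac{3}{2}|S|$ counting only edges inside $S$) cannot account for all of $|E(G[S])|$; since $|E(G[S])| \le 3|S| - 6$ always, one must instead argue that the $2$-degenerate part is forced to waste many of its ``backward'' edges outside $S$, or use a parity/discharging refinement. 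I expect the cleanest proof uses a specific small-to-moderate triangulation (an explicit graph on a few dozen vertices, likely built from the icosahedron by stacking) plus a short case analysis or a refined count that tracks where each edge at a high-degree vertex can go; verifying that this explicit graph is not $(2,3)$-decomposable is the crux.
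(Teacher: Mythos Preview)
You have correctly identified that the crude global count $|E(G)|\le \tfrac72|V(G)|-3$ is vacuous for planar graphs, and you have landed on the right construction: the paper indeed takes $G'$ to be the Kleetope of an arbitrary plane triangulation $G$ on $n\ge 11$ vertices (a vertex $v_f$ inserted in every face and joined to its three corners) and shows that $G'$ is not $(2,3)$-decomposable.

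The gap is in the mechanism for the contradiction. Your plan is to find a subset $S$ with $|E(G'[S])|>2|S|-3+\tfrac32|S|$, but as you yourself observe, $|E(G'[S])|\le 3|S|-6$ always holds, so no such $S$ exists; a pure induced-edge count cannot succeed. The paper sidesteps this by counting a different quantity. Taking $S=V(G)$ (the original vertices), it bounds $\sum_{v\in S}\deg_H(v)\le 3n$; this sum counts $H$-edges inside $S$ twice \emph{and} $H$-edges from $S$ to stacked vertices once. The decisive step, which your proposal is missing, is an extremal/swap argument: choose a $(2,3)$-decomposition $(D,H)$ of $G'$ maximizing $|E(H)\setminus E(G)|$, and show that then every stacked vertex $v_f$ satisfies $\deg_H(v_f)\ge 1$ (otherwise, using the $2$-degenerate ordering one finds an $H$-edge on the base triangle that can be swapped for an edge $v_fv_j$, increasing $|E(H)\setminus E(G)|$). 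This forces $|E(H)\setminus E(G)|\ge 2n-4$, while $|E(D)\cap E(G)|\le 2n-3$ by $2$-degeneracy, hence $|E(H)\cap E(G)|\ge n-3$. Plugging into the degree sum gives $3n\ge 2(n-3)+(2n-4)=4n-10$, impossible for $n\ge 11$.

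So your outline has the right graph but not the right invariant: replace the induced-edge count on $S$ by the $H$-degree sum on $S$, and add the maximality-plus-swap step that pushes at least one $H$-edge onto each stacked vertex.
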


As a consequence of Theorem \ref{thm:2Dplus6} and Proposition \ref{prop:(2,3)}, we have $4 \le h_2 \le 6$. The exact value of $h_2$ remains an open problem. 

Note that for every integer $h$,  the complete bipartite graph with two vertices in one part and $2h+2$ vertices in the other part  is not $(1, h)$-decomposable. Thus $h_1 = \infty$.  

A graph $G$ is {\it $h$-defective $k$-choosable} if for any $k$-list assignment $L$ of $G$, there is an $L$-colouring of $G$ in which each vertex $v$ has at most $h$-neighbours coloured the same colour as $v$.
The concept of {\it $h$-defective $k$-paintable} is an online version of $h$-defective $k$-choosable, defined through
a two-person game (see \cite{gutowski2018defective} for its definition), and {\em $h$-defective $k$-DP-colourable} is a generalization of 
$h$-defective $k$-choosable (see \cite{jing2019defective} for its definition). 
We remark that $(d,h)$-decomposable graphs are easily seen to be  $h$-defective $(d+1)$-choosable,   $h$-defective $(d+1)$-paintable, as well as  $h$-defective $(d+1)$-DP-colourable.  
On the other hand, $(d,h)$-decomposable seems to be considerably  stronger than $h$-defective $(d+1)$-choosability and $h$-defective $(d+1)$-paintability. Cushing and Kierstead~\cite{2010CuKi}   proved that every planar graph is $1$-defective $4$-choosable. This result was strengthened recently by  Grytczuk and Zhu~\cite{2020GrZh}   who proved that every planar graph is $1$-defective $4$-paintable. As observed above, planar graphs with minimum degree $5$ are not $(3,1)$-decomposable.  Eaton and Hull~\cite{1999EaHu}, and independently \v Skrekovski~\cite{1999Sk} proved that every planar graph is $2$-defective $3$-choosable. Gutowski, Han, Krawcyzk and Zhu \cite{GHKZ}
[ Defective 3-paintability of planar graphs, Electronic Journal of Combinatorics, Volume 25, Issue 2 (2018) ] showed that there are planar graphs that are not $2$-defective $3$-paintable, but every planar graph is $3$-defective $3$-paintable. 
We show in this paper that not every planar graph is $(2,3)$-decomposable.

The proof of Theorem~\ref{thm:4DplusM} (given in Section~\ref{sec:4DplusM}) uses standard discharging method.     Theorem~\ref{thm:2Dplus6} and Theorem~\ref{thm:3Dplus2}  are obtained  by proving  stronger and more 
technical statements in Section~\ref{sec:2Dplus6} and Section~\ref{sec:3Dplus2}, respectively.
The technical statement used to derive Theorem \ref{thm:2Dplus6} is more intriguing and the proof is also
more complicated. 

We end this section with some definitions and notation.
A vertex ordering $\sigma$ of $G$ is \emph{$d$-degenerate} if every vertex has at most $d$ earlier neighbors in the ordering $\sigma$.
Note that a graph $G$ is $d$-degenerate if and only if it has a $d$-degenerate ordering.
For $S \subseteq V(G)$ and a vertex ordering $\sigma$ of $G$, let $\sigma -S$ denote a subordering of $\sigma$ obtained by deleting the vertices in $S$.
We also note that a graph $G$ is $d$-degenerate if and only if it has an acyclic orientation whose maximum out-degree is at most $d$. Therefore, when we prove Theorems~\ref{thm:3Dplus2}~and~\ref{thm:2Dplus6}, we find a pair $(D,H)$, where $H$ is a subgraph of $G$ with  $\Delta(H)\le h$ and $D$ is  an acyclic orientation of $G-E(H)$  with
$\Delta^+(D)\le d$.
 
 Let $G$ be a plane graph.
A \emph{plane subgraph} of $G$ is a subgraph of $G$ whose plane embedding is inherited.
We say $G$ is a \emph{near triangulation} if $G$ is a $2$-connected plane graph
and every face of $G$ except the outer face is a triangle.
Note that the outer face of a near plane triangulation $G$ is a cycle since $G$ is $2$-connected.
A {\it boundary vertex} and {\it boundary edge} of $G$ are a vertex and an edge, respectively, on the boundary cycle of $G$.  For a boundary edge $uv$, $v$ is called a  \emph{boundary neighbor} of $u$.

An arc, which is a directed edge, is represented by an ordered pair of vertices;
namely, $uv$ is an (undirected) edge whereas $(u,v)$ is an arc from $u$ to $v$.
For a graph $G$ and a set $E$ of unordered pairs on $V(G)$, let $G+E$ (resp. $G-E$) denote the graph obtained from $G$ by adding (resp. deleting) the elements of $E$ to (resp. from) the edge set of $G$.
If $|E|=1$, say $E=\{ww'\}$, then denote $G+E$ (resp. $G-E$) by $G+ww'$ (resp. $G-ww'$).
For a digraph $D$ and a set $A$ of ordered pairs on $V(D)$, define $D+A$, $D-A$, $D+(w,w')$, and $D-(w,w')$ similarly.
Moreover, for a digraph $D$  and vertices $x,y \in V(D)$, let $D-xy$ denote the subdigraph $D-\{(x,y),(y,x)\}$.
We often drop the parentheses to improve the readability.
For instance, for a digraph $D$ and sets $A_1$, $A_2$, $A_3$ of ordered pairs on $V(D)$,
both $D-A_1+A_2+A_3$ and $D-A_1+ (A_2 + A_3)$ denote $((D-A_1)+A_2)+A_3$.

For two (di)graphs $G_1$ and $G_2$,  let $G_1\cup G_2$ be the (di)graph such that $V(G_1\cup G_2)=V(G_1)\cup V(G_2)$ and $E(G_1\cup G_2)=E(G_1)\cup E(G_2)$.

\section{Proof of  (2,6)-decomposability}\label{sec:2Dplus6}

Assume $G$ is a near triangulation, $xy$ is a  boundary edge of $G$,  and $w$ is a boundary vertex  of  $G$. We denote by $b_{G,xy}(w)$   the number of vertices in $\{x,y\}$ that are boundary neighbors of $w$. Recall that $w$ is a boundary neighbor of $x$ if $xw$ is a boundary edge of $G$.
If there is no confusion, then we use $b(w)$ to denote $b_{G,xy}(w)$. Instead of proving Theorem~\ref{thm:2Dplus6}
directly, we prove the following more technical result, which is easily seen to imply Theorem~\ref{thm:2Dplus6}.

\begin{theorem}\label{thm:2Dplus6:main}
Let $G$ be a near triangulation, $xy$ be a boundary edge of $G$, and $z$ be a boundary vertex of $G$ other than $x$ and $y$.
Then there exist a subgraph $H$ and an acyclic orientation $D$ of  $G-E(H)$ satisfying the following:
\begin{itemize}
\item[\rm (i)] For every interior vertex $w$, $\deg_D^+(w)\le 2$ and $\deg_H(w)\le 6$.
\item[\rm (ii)] For every boundary vertex $w$, $\deg_D^+(w)\le 1$ and $\deg_H(w)\le 5-b(w)$.
\item[\rm{(iii)}] $\deg_D^+(y)=\deg_H(y)=0$, $N_D^+(x)=\{y\}$, and $\deg_H(x)\le 1$.
If $\deg_H(x)=1$, then the neighbor $s$ of $x$ in $H$ is a boundary vertex and $s\in N_G(x)\cap N_G(y)$.
\item[\rm(iv)]  $\deg_H(z)\le 4-b(z)$.
If  equality holds, then $\deg_H(w)\le 4-b(w)$ for every boundary neighbor $w$ of $z$.
\item[\rm(v)]  For the
boundary neighbors
 $z'$ and $z''$ of $z$,   $\deg_H(z)+\deg_{H}(z')+\deg_{H}(z'') \le 12-b(z')-b(z'')$.
\end{itemize}
Let us call such $(D,H)$ a $(2,6)$-decomposition of $G$ with respect to $(x,y,z)$.
\end{theorem}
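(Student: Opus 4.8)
The plan is to prove Theorem~\ref{thm:2Dplus6:main} by induction on $|V(G)|$, using the boundary-vertex bookkeeping encoded in conditions (i)--(v) as a strengthened induction hypothesis. For the base case one checks small near triangulations directly (e.g.\ a single triangle $xyz$, where we put $H$ empty and orient the triangle acyclically with $y$ a sink and $N_D^+(x)=\{y\}$). For the inductive step, the standard strategy for near triangulations is to look at the boundary cycle $C$ and distinguish cases according to whether $C$ has a chord, and if not, according to the local structure near a well-chosen boundary vertex.

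\medskip

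\noindent\textbf{Chord case.} If the boundary cycle has a chord $e=uv$, this chord splits $G$ into two near triangulations $G_1$ and $G_2$ glued along $uv$. I would arrange the split so that $xy$ lies on the boundary of $G_1$, apply induction to $G_1$ with respect to $(x,y,z_1)$ for a suitable choice of third boundary vertex $z_1$ (typically $z_1\in\{u,v\}$, or $z_1=z$ if $z$ is on the $G_1$ side), and apply induction to $G_2$ with respect to $(u,v,z_2)$ after possibly relabeling so that the edge $uv$ plays the role of ``$xy$'' in $G_2$. The two decompositions are then merged: the edge $uv$ is oriented once (inheriting the orientation forced by the $(x,y)$-type condition from whichever piece treats it as its distinguished edge), and one must check that the degree sums of boundary vertices shared by $G_1$ and $G_2$ (namely $u$ and $v$) do not exceed the allowed bounds. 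The delicate point here is the additive conditions (iv) and (v): the slack $4-b(w)$ versus $5-b(w)$ and the triple-sum bound must be allocated carefully between the two subgraphs, and one should choose which piece is allowed to ``spend'' the budget at $u$ and at $v$ so that the glued object still satisfies the stronger clauses.

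\medskip

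\noindent\textbf{Chordless case.} If the boundary cycle is an induced cycle, pick a boundary vertex $p$ far from $x,y,z$ on $C$ (so that its two boundary neighbors $p',p''$ are interior-facing, or at least not in $\{x,y,z\}$). Because $G$ is a near triangulation, the neighborhood of $p$ consists of $p'$, $p''$, and a ``fan'' $q_1=p',q_2,\dots,q_k=p''$ of interior vertices forming a path, with all faces $pq_iq_{i+1}$ triangles. Delete $p$ to obtain a smaller near triangulation $G'=G-p$ (after checking $G'$ is still a near triangulation, which holds when $C$ is chordless), apply induction to $G'$ with respect to $(x,y,z)$ (note $x,y,z\notin\{p\}$), and then reinsert $p$. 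The reinsertion must assign the edges at $p$: put some of the $pq_i$ edges into $H$ (respecting $\deg_H(p)\le 5-b(p)$) and orient the rest out of $p$ (respecting $\deg_D^+(p)\le1$), while the new in-edges at the $q_i$'s and at $p',p''$ must not violate their out-degree bounds (they only gain in-degree, which is fine for acyclicity and out-degree) or their $\deg_H$ bounds (so only a bounded number of $pq_i$ can go into $H$). Acyclicity is maintained because $p$ becomes a near-source in the new orientation. The main subtlety is guaranteeing there is a valid assignment: $p$ has $\deg_G(p)=k+1$ where $k$ can be large, so at most one edge at $p$ is oriented outward and the rest — up to $5-b(p)$ of them — go into $H$; this forces $\deg_G(p)\le 6-b(p)$, which is \emph{not} automatic, so in fact one cannot simply delete an arbitrary boundary vertex.

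\medskip

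\noindent\textbf{Main obstacle.} The real work — and the reason the authors call this statement ``more intriguing'' — is that a naive vertex deletion fails for high-degree boundary vertices, so the chordless case must instead be handled by a more clever local reduction: either deleting a low-degree boundary vertex guaranteed to exist by a discharging/counting argument on the outer boundary, or contracting/rerouting a near-triangular configuration (e.g.\ identifying $p$ with one of its fan neighbors, or deleting $p$ together with re-triangulating the resulting face), and then transferring the decomposition back across this operation while re-establishing all five clauses. I expect the bulk of the proof to consist of a careful case analysis of the configuration around the chosen reducible vertex, with the additive constraints (iv) and (v) dictating exactly which edges at the reinserted vertex may be placed in $H$; verifying those two clauses after reinsertion, especially when $z$ or a neighbor of $z$ is close to the reduction site, will be the most technical part.
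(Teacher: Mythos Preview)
Your chord case is roughly right and parallels the paper's Case~2. The chordless case, however, has a genuine gap, and it begins with a reversed orientation. When you delete a boundary vertex $p$ with fan $q_1,\dots,q_k$, the $q_i$ are boundary vertices of $G'=G-p$ (so $\deg_{D'}^+(q_i)\le 1$) but become \emph{interior} vertices of $G$ (allowed out-degree $2$); hence each $q_i$ can absorb one extra out-arc, and the fan edges should all be oriented $(q_i,p)$, \emph{into} $p$. Only the two boundary edges $pp',pp''$ then need to be split between one out-arc of $p$ and one $H$-edge, so there is no constraint $\deg_G(p)\le 6-b(p)$; high-degree $p$ is harmless degree-wise. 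The real obstruction is acyclicity: the single out-arc $(p,p')$ may close a directed cycle $p\to p'\to\cdots\to q_i\to p$ if $D'$ contains a directed path from $p'$ to some $q_i$. The paper avoids this by always routing that out-arc to the global sink $y$, which forces the deleted vertex to be a boundary neighbor of $y$. Consequently the vertex removed is $z$ itself, and only when $z$ is adjacent to $y$ (the paper's Case~3-1); your instinct to pick $p$ ``far from $x,y,z$'' goes in exactly the wrong direction.

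When $z$ is not a boundary neighbor of $x$ or $y$ (Case~3-2), the paper neither hunts for a low-degree boundary vertex via discharging nor contracts anything. Instead it deletes the entire boundary subpath $P$ from $y$'s non-$x$ neighbor $p_1$ around to $z$, applies induction to the block $G'$ of $G-V(P)$ containing $x,y,z'$ with respect to $(x,y,z')$, and then treats the strip between $P$ and the new boundary path $Q$ of $G'$ separately: the vertices $q_1,\dots,q_k$ of $Q$ having at least two neighbors in $P$ slice this strip into a chain of smaller near triangulations $G_1,\dots,G_k$, and each $G_i$ is decomposed by a further call to the induction hypothesis (with respect to a triple on its own boundary). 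The pieces are stitched together with the arc $(p_1,y)$ to the sink and a handful of local adjustments; the extra $H$-edge lands at $z'$, where the slack in condition~(iv) for $G'$ absorbs it. This path-deletion-plus-recursive-fan-decomposition is the missing idea. (A small separate point: your base case with $H$ empty forces $\deg_D^+(z)=2$, violating~(ii); the paper puts $xz$ into $H$ instead.)
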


\begin{lemma}\label{obs_realization}
Let $G$ be a near triangulation, $xy$ be a boundary edge of $G$, and $z$ be a boundary vertex of $G$ other than $x$ and $y$.
If $(D,H)$ is a $(2,6)$-decomposition of $G$ with respect to $(x,y,z)$, then there is a $(2,6)$-decomposition of  $G$ with respect to $(y,x,z)$.
\end{lemma}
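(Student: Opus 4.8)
The plan is to show that the conditions defining a $(2,6)$-decomposition are almost symmetric in $x$ and $y$, with the sole asymmetry concentrated in condition (iii). Given a $(2,6)$-decomposition $(D,H)$ with respect to $(x,y,z)$, conditions (i), (ii), (iv), and (v) make no reference to the ordered pair $(x,y)$ at all — they only mention ``boundary edge'', ``interior vertex'', ``boundary vertex'', and the distinguished vertex $z$ — so they hold verbatim for the triple $(y,x,z)$ as well. Hence the only work is to modify $(D,H)$ near the edge $xy$ so that (iii) holds with the roles of $x$ and $y$ swapped, i.e.\ so that $\deg_D^+(x)=\deg_H(x)=0$, $N_D^+(y)=\{x\}$, and $\deg_H(y)\le 1$ with the corresponding structural condition on the $H$-neighbor of $y$.

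First I would record what (iii) gives us about the starting decomposition: $y$ is a sink in $D$ with $\deg_H(y)=0$, the only out-arc at $x$ in $D$ is $(x,y)$, and $x$ has at most one $H$-edge, which if present goes to a common boundary neighbor $s\in N_G(x)\cap N_G(y)$. In particular the edge $xy$ itself lies in $D$ (oriented $(x,y)$), not in $H$. The natural move is: reverse the arc $(x,y)$ to $(y,x)$, leaving everything else in $D$ and $H$ untouched. I would then check (a) the orientation stays acyclic, (b) out-degree bounds are preserved, and (c) the $H$-side is unaffected so (iii) for $(y,x,z)$ holds. For (a): since $y$ was a sink in $D$, it has no out-arcs, so after adding $(y,x)$ its out-neighborhood is $\{x\}$; a directed cycle through the new arc would have to leave $x$, but $x$'s only out-arc was $(x,y)$, which we just deleted, so $x$ is now a sink and no cycle can pass through it — hence no new directed cycle is created. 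For (b): $\deg_D^+(y)$ goes from $0$ to $1$, which is fine since $y$ is a boundary vertex and (ii)/(iii) for the new triple only requires $\deg_D^+(y)\le 1$ (indeed (iii) requires exactly $N_D^+(y)=\{x\}$, which now holds); $\deg_D^+(x)$ goes from $1$ to $0$, and all other out-degrees are unchanged. For (c): $H$ is literally unchanged, so $\deg_H(y)=0\le 1$ (and the conditional clause about the $H$-neighbor of $y$ is vacuous), while $\deg_H(x)$ is whatever it was, at most $1$, and if it is $1$ the neighbor $s$ is still a boundary vertex in $N_G(x)\cap N_G(y)$ — but now this is exactly the structural requirement that (iii) for $(y,x,z)$ places on the $H$-neighbor of the ``new $x$'', namely the old $x$. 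Wait: I should double-check that (iii) for $(y,x,z)$ demands $\deg_H(\text{new }y)=\deg_H(x)\le 1$ and that its conditional clause matches; it does, since the roles are simply transposed.

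The only genuinely delicate point — and the step I expect to need the most care — is verifying that no condition I am treating as ``symmetric'' secretly privileges $x$ or $y$; in particular I must confirm that (v), which mentions the boundary neighbors $z'$, $z''$ of $z$, and (iv) behave identically, and that after reversing the arc the orientation $D$ is still an orientation of $G-E(H)$ (it is, since we changed no edge's presence, only its direction). I would also note that the argument does not require $z \notin \{$ endpoints of $xy\}$ beyond what is already in the hypothesis, and that all of (i)–(v) for $(y,x,z)$ follow. Assembling these checks yields the claimed $(2,6)$-decomposition with respect to $(y,x,z)$, completing the proof.
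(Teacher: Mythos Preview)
Your argument has a genuine gap in the case $\deg_H(x)=1$. Condition~(iii) is \emph{not} symmetric in the way you claim: it requires $\deg_H(y)=0$ exactly, while only $\deg_H(x)\le 1$. So for the swapped triple $(y,x,z)$, the ``new $y$'' is $x$, and (iii) now demands $\deg_{H'}(x)=0$, not $\deg_{H'}(x)\le 1$. If the original $(D,H)$ has $\deg_H(x)=1$ and you only reverse the arc $(x,y)$, leaving $H$ untouched, then $\deg_{H'}(x)=1\ne 0$ and (iii) fails. Your sentence ``(iii) for $(y,x,z)$ demands $\deg_H(\text{new }y)=\deg_H(x)\le 1$'' is precisely where the misreading occurs.

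The paper's proof handles this case with an extra local modification. When $\deg_H(x)=1$, let $s$ be the $H$-neighbor of $x$; by (iii) for $(x,y,z)$, $s$ is a boundary vertex in $N_G(x)\cap N_G(y)$, and in particular $sy\in E(G)$, so by (ii) the arc $(s,y)$ is in $D$. One then replaces the $H$-edge $sx$ by $sy$ and redirects the arc $(s,y)$ to $(s,x)$, in addition to reversing $(x,y)$; that is, $D'=D-(x,y)-(s,y)+(s,x)+(y,x)$ and $H'=H-sx+sy$. Now $\deg_{H'}(x)=0$ and $\deg_{H'}(y)=1$ with $H'$-neighbor $s\in N_G(x)\cap N_G(y)$ a boundary vertex, so (iii) for $(y,x,z)$ holds; acyclicity follows since $x$ becomes a sink and $y$'s only out-arc is $(y,x)$. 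This missing edge-swap is the key idea your argument lacks.
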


\begin{proof} Let $(D,H)$ be a $(2,6)$-decomposition of $G$ with respect to $(x,y,z)$.
If $\deg_H(x)=0$, then let $D'=D-(x,y)+(y,x)$ and $H'=H$.
If $\deg_H(x)=1$, then let $w$ be the neighbor of $x$ in $H$.
Then $w$ is a boundary vertex   and $(w,y)$ is an arc of $D$.
Let $D'=D-(x,y) - (w, y)+(w,x)+(y,x)$
and $H'=(H+wy)-wx$.
Then  $(D',H')$ is a $(2,6)$-decomposition of $G$ with respect to $(y,x,z)$.
\end{proof}

\begin{proof}[Proof of Theorem~\ref{thm:2Dplus6:main}]
We use induction on $|V(G)|$. If $|V(G)|= 3$, then $G=K_3$.
Let $D$ be a digraph with two arcs $(x,y)$ and  $(z,y)$,  and $H$ be a graph with one edge $xz$.  
Then $(D,H)$ is a $(2,6)$-decomposition of $G$ with respect to $(x,y,z)$. 
Suppose $|V(G)|\ge 4$.
Let $C$ be the boundary cycle of $G$, and let $z'$ and $z''$ be the boundary neighbors of $z$.
For simplicity, we denote $b_{G,xy}(w)$ by $b(w)$.

\medskip

\noindent {\bf Case 1}  $C=(x,y,z)$ is a triangle.

Let $G'=G-z$. Since $G$ contains at least four vertices, $G'$ is a near triangulation.
Let $C'$ be the boundary cycle of $G'$, and let $w$ be a boundary vertex of $G'$ other than $x$ and $y$.
See Figure~\ref{fig:(2,6)_triangle}.
By the induction hypothesis,
there is a $(2,6)$-decomposition $(D',H')$ of $G'$ with respect to $(x,y,w)$.

\begin{figure}[h!]
  \centering
  \includegraphics[width=3.2cm,page=8]{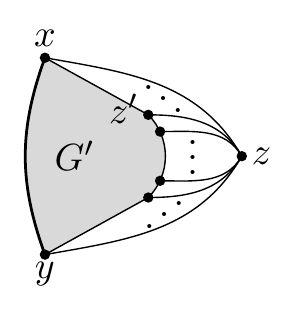}\\
  \caption{An illustration for \textbf{Case 1}}
  \label{fig:(2,6)_triangle}
\end{figure}

If $\deg_{H'}(x)=0$, then let $D=D'+\{(u,z)\mid u \in V(C')\setminus \{x,y\}\}+(z,y)$ and $H=H'+xz$.
If $\deg_{H'}(x)=1$, then for the vertex $s$ with $sx\in E(H')$,
$s$ belongs to $N_{G'}(x)\cap N_{G'}(y)\cap V(C')$ by Condition (iii), so let  $D=D'+\{(s,x),(z,y)\}+\{(u,z)\mid u \in V(C')\setminus\{x,y,s\}\}$ and $H=(H'-sx)+\{sz,xz\}$.

In both cases, we can easily check Conditions (i)-(iii).
Since $b(z)=2$ and $\deg_H(z)\le 2$, Condition (iv) holds.
Since $b(z')+b(z'')=2$, we have
$\deg_H(z)+\deg_H(z')+\deg_H(z'')=\deg_H(z)+\deg_H(x)+\deg_H(y)\le 2+1=3\le 12-2$, so Condition (v) holds.
Thus $(D,H)$ is a $(2,6)$-decomposition of $G$ with respect to $(x,y,z)$.

\medskip

\noindent {\bf Case 2} $C$ has  a chord $uv$ that either separates $xy$ and $z$ or is incident with one of $x,y,z$.

Let $G_1$ and $G_2$ be the plane subgraphs of $G$ separated by $uv$. 
Namely, $G_1=G[V_1], G_2=G[V_2]$, where $V_1 \cup V_2 =V(G)$ and $V_1 \cap V_2=\{u,v\}$.  Then each $G_i$ is a near triangulation. Let $C_i$ be the boundary cycle of $G_i$. 
Without loss of generality, assume $x,y \in V(G_1)$.
 We divide the proof into three subcases: (1) $z\not\in V(G_1)$, (2) $z\in \{u,v\}$, and (3) $z\in V(G_1)\setminus \{u,v\}$.
In each case,
we will find a $(2,6)$-decomposition of $G_1$ with respect to $(x,y,w')$ for some $w'\in\{z,u,v\}$, and a $(2,6)$-decomposition $(D_2,H_2)$ of $G_2$ with respect to $(u,v,w^*)$ or $(v,u,w^*)$ for some vertex $w^*$.
Let $D=D_1 \cup (D_2-uv)$ and $H=H_1 \cup H_2$.
It is clear that $D$ is acyclic.

For simplicity, denote $b_{G_1,xy}(w)$ and $b_{G_2,uv}(w)$ by $b_1(w)$ and $b_2(w)$, respectively.
If $w\in V(G_i)\setminus\{u,v\}$, then
$\deg^+_{D}(w)=\deg^+_{D_i}(w)$,
$\deg_{H}(w)=\deg_{H_i}(w)$. If $w$ is a boundary vertex of $G$, then $b_{i}(w)\ge b(w)$.
If $w\in \{u,v\}$, then $\deg^+_{D}(w)=\deg^+_{D_1}(w)$,
$\deg_{H}(w)=\deg_{H_1}(w)+\deg_{H_2}(w)$, and $b_1(w)\ge b(w)$.
Hence, Condition (i) immediately  holds, and Conditions (ii)-(v) hold
 except those regarding the degrees in $H$ involving $u$ or $v$.
From now on, we will prove that Condition (ii) holds  when $w \in \{u,v\}$, Condition (iii) holds when $x$ or $y$ is in $\{u,v\}$, and Conditions (iv) and (v) hold when $z$, $z'$, or $z''$ is in $\{u,v\}$.

\begin{figure}[h!]
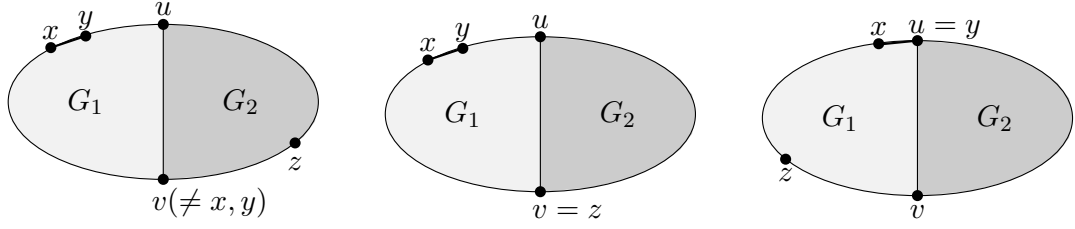

  \centering
  \includegraphics[width=4.5cm,page=2]{fig-degenerate-combine.pdf} \quad
    \includegraphics[width=4.5cm,page=3]{fig-degenerate-combine.pdf}
  \quad
    \includegraphics[width=4.5cm,page=4]{fig-degenerate-combine.pdf}\\
  \caption{Illustrations for \textbf{Case 2}}
    \label{fig:(2,6)_z_not_in_G_1}
\end{figure}

\smallskip

\noindent {\bf Case 2-1}  $z\not\in V(G_1)$.

We may assume $v\not\in\{ x,y\}$. See the first figure of Figure~\ref{fig:(2,6)_z_not_in_G_1}.
By Lemma~\ref{obs_realization}, we may assume $x\neq u$.
Note that $z\not\in\{u,v\}$ and  $z', z'' \in V(C_2)$.
Let $(D_1,H_1)$ be a (2,6)-decomposition of $G_1$ with respect to $(x,y,v)$.
If $\deg_{H_1}(u)\le 4-b_1(u)$ and $y\neq u$, then let $(D_2,H_2)$ be a $(2,6)$-decomposition of $G_2$ with respect to $(u,v,z)$.  Otherwise, let $(D_2,H_2)$ be a $(2,6)$-decomposition of $G_2$ with respect to $(v,u,z)$. Recall that it is enough to check Conditions (ii)-(v) for the case where  $\deg_{H}(u)$ or $\deg_H(v)$ is involved.

Condition (ii) holds, since
\begin{eqnarray*}
&&\deg_H(v)\le \deg_{H_1}(v)+\deg_{H_2}(v) \le (4-b_1(v))+1 \le 5-b(v)\\
&&\deg_H(u)\le \deg_{H_1}(u)+\deg_{H_2}(u) \le
\begin{cases}
(4-b_1(u))+1 \le 5-b(u) &\text{if }\deg_{H_1}(u)\le 4-b_1(u)\\
(5-b_1(u))+0 \le 5-b(u) &\text{otherwise.}
\end{cases}
\end{eqnarray*}
If $u=y$, then $\deg_{H}(y)=\deg_{H_1}(y)+\deg_{H_2}(y)=0+0=0$, which implies Condition (iii).
To check Condition (iv), suppose that $\deg_H(z)=4-b(z)$.
Since $\deg_H(z)=\deg_{H_2}(z)\le 4-b_2(z)\le 4-b(z)$, we conclude that  $b_2(z)=b(z)$ and  $\deg_{H_2}(z)= 4-b_2(z)$.
Since $b_2(z)=b(z)$,  either $b(z)=b_2(z)=0$ or $u=y$ and $y$ is a boundary neighbor of $z$. For the first case, $\{z',z''\} \cap \{u,v\}=\emptyset$, so $u$ and $v$ are not involved. For the second case,
since $b_1(v)=b(v)+1$, we have the following:
\begin{eqnarray*}
\deg_H(z')\le \deg_{H_1}(z')+\deg_{H_2}(z')\le\begin{cases}
 0+0 \le 4-b(z') &\text{if }z'=u\\
 4-b_1(z') + 1 \le 4-b(z')&\text{if }z'=v
.\end{cases}
\end{eqnarray*}
Thus Condition (iv) holds.

By Condition (ii),
$\deg_{H}(z')\le 5-b(z')$ and $\deg_{H}(z'')\le 5-b(z'')$.
If $\{z',z''\}=\{u,v\}$, then  $\deg_{H_2}(z)\le 4-b_2(z)\le 2$, so
$\deg_H(z)+\deg_{H}(z')+\deg_{H}(z'')
\le 2+5-b(z')+5-b(z'')=12-b(z')-b(z'')$.
If $z'\in \{u,v\}$ and $z''\not\in\{u,v\}$, then $\deg_{H_2}(z) \le 4-b_2(z)\le 3$,
and so
\begin{eqnarray*}
&&\deg_H(z)+\deg_{H}(z')+\deg_{H}(z'')
\le 3 +5-b(z')+ \deg_{H_2}(z'') \le
12-b(z') -b(z''),
\end{eqnarray*}
where the last inequality is from Condition (iv) for $(D_2,H_2)$ stating that
$\deg_{H_2}(z) =3$ implies
$\deg_{H_2}(z'')\le 4-b_2(z'')\le 4-b(z'')$.
Therefore Condition (v) holds.

\smallskip

\noindent {\bf Case 2-2}   $z\in\{u,v\}$.

We may assume $v=z$. Let $z' \in V(G_1)$ and $z'' \in V(G_2)$. See the second figure of Figure~\ref{fig:(2,6)_z_not_in_G_1}.
If $u\in \{x,y\}$, then we may assume $u=y$ by Lemma~\ref{obs_realization}.
Now let $(D_1,H_1)$ be a $(2,6)$-decomposition of $G_1$ with respect to $(x,y,z)$.
If $\deg_{H_1}(u)\le 4-b_1(u)$ and $u\neq y$, then let $(D_2,H_2)$ be a (2,6)-decomposition of $G_2$ with respect to $(u,v,z'')$.
Otherwise, let $(D_2,H_2)$ be a $(2,6)$-decomposition of $G_2$ with respect to $(v,u,z'')$.
Conditions (ii) and (iii) hold by the same reasoning as in Case 2-1.
Moreover, since $\deg_{H}(z'')\le 4-b_2(z'')=4-b(z'')$, Condition (v) immediately follows from Condition (iv). Hence, it is enough to show that one of the following holds:
 \begin{itemize}
 	\item $\deg_H(z)\le 3-b(z)$.
 	\item  $\deg_H(z)= 4-b(z)$ and $\deg_{H}(z') \le 4-b(z')$.
 \end{itemize}

If $\deg_{H_1}(z)= 4-b_1(z)$, then by Condition (iv) for $(D_1, H_1)$,
we know $\deg_{H_1}(u) \le 4-b_1(u)$. Hence by definition,
 $(D_2, H_2)$ is a $(2,6)$-decomposition of
$G_2$ with respect to $(u,v,z'')$. Therefore $\deg_{H_2}(z)=0$ and
$\deg_{H}(z)=\deg_{H_1}(z)=4-b_1(z)+0 \le 4-b(z)$.  Moreover, if
$\deg_{H}(z)=  4-b(z)$, then $\deg_{H_1}(z)=4-b_1(z)$, and hence
by Condition (iv) for $(D_1,H_1)$, we have
$\deg_{H_1}(z')\le 4-b_1(z')$, and hence $\deg_{H}(z') = \deg_{H_1}(z') \le 4-b(z')$.

Assume  $\deg_{H_1}(z)\le 3-b_1(z)$. If $y=u$, then $b(z)=b_1(z)-1$ and $\deg_H(z) \leq \deg_{H_1}(z)+1$.
Hence  $\deg_{H}(z)\le 3-b(z)$, and we are done.

If $y \ne u$, then $b(z)=b_1(z)$ and $\deg_H(z) \leq \deg_{H_1}(z)+1$. Hence  $\deg_{H}(z)\le 4-b(z)$ holds.
Suppose to the contrary that none of two above conditions previously mentioned holds, i.e., $\deg_{H}(z) = 4 -b(z)$ and  $\deg_H(z') = 5-b(z')$.
Then $\deg_{H_1}(z)= 3-b_1(z)$ and $\deg_{H_2}(z)= 1$.
So $(D_2, H_2)$ is a $(2,6)$-decomposition with respect to $(v,u, z'')$.
By the
 definition of $(D_2,H_2)$, this implies that  $\deg_{H_1}(u)=5-b_1(u)$.
 Moreover,  $\deg_{H_1}(z') = 5-b_1(z')$ and $z' \ne x$. 
 Since $z' \ne x$ and $y \ne u$, this implies 
 $b_1(z)=0$. However,
  \[
\deg_{H_1}(z)+\deg_{H_1}(z')+\deg_{H_1}(u) = (3-b_1(z)) + (5- b_1(z') ) + (5-b_1(u)) = 13-b_1(z')-b_1(u).\]
This is a contradiction to the assumption that $(D_1,H_1)$ is a $(2,6)$-decomposition of $G_1$ with respect to $(x,y,z)$, as
 Condition (v) for $(D_1,H_1)$ is not satisfied.

\smallskip

\noindent {\bf Case 2-3}
$z\in V(G_1)\setminus \{u,v\}$.

By the case assumption, the chord $uv$ is incident with either $x$ or $y$.
By Lemma~\ref{obs_realization}, we may assume $y=u$. See the last figure of Figure~\ref{fig:(2,6)_z_not_in_G_1}. Note that $z', z'' \in V(C_1)$. Let $(D_1,H_1)$ be a $(2,6)$-decomposition of $G_1$ with respect to $(x,y,z)$, and let $(D_2,H_2)$ be a $(2,6)$-decomposition with respect to $(v,u,z^*)$, where $z^*\in V(C_2)\setminus\{u,v\}$.
Note that Condition (iii) clearly holds by definition.
Conditions (ii), (iv), (v) hold since $b_1(v)=b(v)+1$ and $\deg_H(v)\le \deg_{H_1}(v)+1$. 

\medskip

\noindent {\bf Case 3}   Neither Case 1 nor Case 2 applies, in other words,
$C$ has at least four vertices and for every chord $uv$ of $C$, the vertices $x,y,z$ lie in the same component of $G-\{u,v\}$.

\begin{figure}[h!]
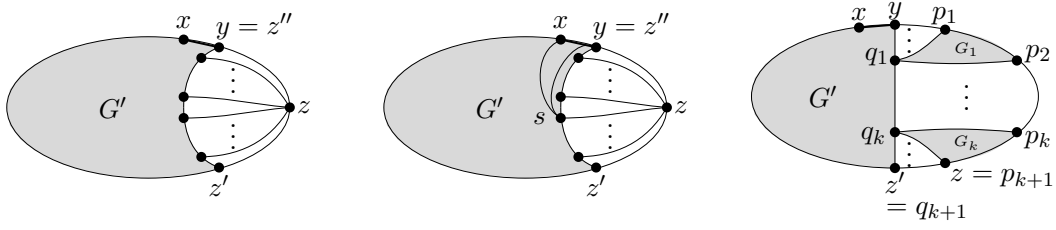

  \centering
  \includegraphics[width=4.5cm,page=5]{fig-degenerate-combine.pdf}  \quad  \includegraphics[width=4.5cm,page=6]{fig-degenerate-combine.pdf}\quad   \includegraphics[width=4.5cm,page=7]{fig-degenerate-combine.pdf}\\
  \caption{Illustrations for \textbf{Case 3}}
  \label{fig:Case3}\end{figure}

\smallskip

\noindent{\bf Case 3-1}  $z$ is a boundary neighbor of either $x$ or $y$.

By Lemma~\ref{obs_realization}, we may assume $yz \in E(C)$.
Since $C$ has at least four vertices, we may assume $z'\not\in\{x,y\}$. See the first figure of Figure~\ref{fig:Case3}.
Let $G'=G-z$, and let $P$ be the boundary path of $G'$ from $y$ to $z'$ not containing $x$.
Let $(D',H')$ be a $(2,6)$-decomposition of $G'$ with respect to $(x,y,z')$.
For simplicity, let $X=V(P)\setminus\{y,z'\}$.

If $\deg_{H'}(x)=0$, then let $D=D'+(z,y)+\{(u,z)\mid u \in X \}$ and $H=H'+zz'$.
It is easy to observe that $(D,H)$ is a $(2,6)$-decomposition of $G$ with respect to $(x,y,z)$. Suppose $\deg_{H'}(x)=1$. Then by Condition (iii) for $(D',H')$,
for the vertex $s$ with $xs \in E(H')$, $s$ belongs to $N_{G'}(x)\cap N_{G'}(y)\cap V(C')$.
See the second figure of Figure~\ref{fig:Case3}. Since $G$ has no chord incident with either $x$ or $y$, $s\in X$.
Let $D=D'+(s,x)+(z,y)+\{(u,z)\mid u \in V(P)\setminus \{y,z',s\}\}$ and $H=(H'-sx)+\{zz',sz\}$.
Then $(D,H)$ is a $(2,6)$-decomposition of $G$ with respect to $(x,y,z)$.

\smallskip

\noindent {\bf Case 3-2}
Neither $x$ nor $y$ is a boundary neighbor of $z$.

Then $z', z''$ are different from $x,y$.
By Lemma~\ref{obs_realization}, we may assume $x,y,z'', z,z'$ is the clockwise ordering on $C$.
See the last figure of Figure~\ref{fig:Case3}.
Let $p_1$ be the boundary neighbor of $y$ other than $x$. Let
$P$ be the clockwise subpath of $C$ joining $p_1$ and $z$.
Note that by our case assumption, $|V(P)|\ge 2$.
Let  $G'$ be the block of $G-V(P)$ containing $x, y,z'$, and let
$C'$ be the boundary  cycle of
$G'$.
Let $Q$ be the clockwise subpath of $C'$ joining $y$ and $z'$.

\begin{claim}\label{claim:Q}
Every two adjacent vertices $q$ and $q'$ on $Q$ have a common neighbor in $V(P)$.
\end{claim}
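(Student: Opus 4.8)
The plan is to prove Claim~\ref{claim:Q} by exploiting the structure of the near triangulation $G$ together with the way $G'$ and $P$ were carved out. Recall the setup: $C$ is the boundary cycle of the near triangulation $G$, $P$ is the clockwise subpath of $C$ from $p_1$ (the boundary neighbor of $y$ other than $x$) to $z$, the graph $G'$ is the block of $G - V(P)$ containing $x,y,z'$, and $Q$ is the clockwise subpath of $C'$ from $y$ to $z'$. The key observation is that the vertices of $Q$ are exactly those boundary vertices of $G$ that became "exposed'' on the boundary of $G'$ after deleting $V(P)$, and every edge $qq'$ of $Q$ is either an edge of $C$ or a chord of $C$ that, by the Case~3 assumption, does not separate $x$ from $y$ or $z$.

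First I would fix two consecutive vertices $q, q'$ on $Q$ and consider the edge $qq'$ in $G'$; since $G' \subseteq G$, $qq'$ is an edge of $G$. If $qq'$ is a boundary edge of $G$ (lies on $C$), I would argue that $q$ and $q'$ cannot both lie on $C$ while being consecutive on $Q$ unless the whole arc of $C$ from $q$ to $q'$ "on the $P$-side'' is trivial, which contradicts $|V(P)| \ge 2$ and the fact that $q,q'$ are also on the boundary of $G - V(P)$; so in the relevant situation $qq'$ is a chord of $C$. Then $qq'$ together with the clockwise arc of $C$ from $q$ to $q'$ through $P$ bounds a region $R$ of the plane whose interior (in $G$) is a near triangulation with outer cycle $q q' \cup (\text{that arc})$. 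Because $G$ is a near triangulation, the edge $qq'$ lies on exactly two triangular faces; the one inside $R$ has a third vertex $r$. I would then show $r \in V(P)$: indeed $r$ is a boundary vertex of the sub-near-triangulation inside $R$, and since $q$ and $q'$ are consecutive on $Q$ (hence on the boundary of $G-V(P)$), no vertex strictly between them on the boundary of $G - V(P)$ survives the deletion — equivalently, every vertex adjacent to both $q$ and $q'$ and lying on the $P$-side of the chord $qq'$ must belong to the deleted set $V(P)$. Hence $r \in V(P)$ is a common neighbor of $q$ and $q'$, as desired.

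The step I expect to be the main obstacle is the careful bookkeeping that shows the third vertex $r$ of the triangle on the $P$-side of $qq'$ actually lies in $V(P)$ rather than merely in $V(G) \setminus V(G')$. This requires being precise about what "the block of $G - V(P)$ containing $x,y,z'$'' looks like: one must rule out that $r$ is a cut vertex-type leftover that was discarded when passing to the block $G'$ but is not on $P$. I would handle this by noting that $G$ is $2$-connected and every non-outer face is a triangle, so $G - V(P)$ has a single nontrivial block on the side of $Q$ — more carefully, any vertex not on $P$ that is "enclosed'' by the chord $qq'$ on the $P$-side would, by near-triangulation-ness, force a face of $G$ incident to $qq'$ inside $R$ with third vertex not on $C$, and then by planarity that vertex lies strictly inside $R$; iterating, the boundary arc of $R$ consists only of $P$-vertices and $q,q'$, so the triangle on $qq'$ inside $R$ has its apex on $P$. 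Once this structural fact is pinned down, the claim follows immediately, and it will be used in the sequel to route out-arcs from the vertices of $P$ toward the vertices of $Q$ when assembling the decomposition.
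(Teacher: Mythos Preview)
There is a genuine gap. Your argument rests on the premise that every vertex of $Q$ lies on the boundary cycle $C$ of $G$, so that each edge $qq'$ of $Q$ is either a boundary edge of $C$ or a chord of $C$. This premise is false: internal vertices of $Q$ are typically \emph{interior} vertices of $G$ that became boundary vertices of $G'$ only after $V(P)$ was deleted. For a concrete instance, take $C=(x,y,p_1,z,z')$ (so $z''=p_1$) with a single interior vertex $u$ adjacent to all five boundary vertices; then $V(P)=\{p_1,z\}$, the block $G'=G-V(P)$ has outer cycle $(x,y,u,z')$, and $Q=(y,u,z')$. Your dichotomy ``edge of $C$ versus chord of $C$'' does not cover the edges $yu$ and $uz'$, and the region $R$ you build from $qq'$ together with an arc of $C$ is undefined when $q$ or $q'$ is interior to $G$. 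The ``iterating'' argument in your final paragraph does not repair this, since it too is predicated on $qq'$ being a chord of $C$.

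The paper's proof is a two-line argument that avoids any reference to where $q,q'$ sit relative to $C$. Since $qq'$ lies on the outer cycle $C'$ of $G'$ and $G$ is a near triangulation, the face of $G$ on the outside of $C'$ along $qq'$ is a triangle $qq'w$ with $w\notin V(G')$. If $w\notin V(P)$, then $w\in V(G-V(P))$ and the triangle $qq'w$ lies entirely in $G-V(P)$; hence $w$ belongs to the same block of $G-V(P)$ as $q$ and $q'$, forcing $w\in V(G')$---a contradiction. (Equivalently, as the paper phrases it, $qq'$ would fail to be a boundary edge of $G-V(P)$.) Your instinct to look at the third vertex of the triangle on the $P$-side of $qq'$ is exactly right; what is missing is the one-line block argument that pins that vertex to $V(P)$ without assuming $q,q'\in V(C)$.
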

\begin{proof}
Since $G$ is a near triangulation, $q$ and $q'$  have a common neighbor $w$ in $V(G)\setminus V(G')$.
If $w$ is not on $P$, then $qq'$ cannot be a boundary edge of $G-V(P)$, which is a contradiction.
\end{proof}

Since there is no chord incident with $y$, $|V(Q)|\ge 3$.
By Claim~\ref{claim:Q}, every vertex of $Q$ has a neighbor in $V(P)$.
If every vertex of $Q$ has exactly one neighbor in $V(P)$, then 
$V(P)=\{z\}$, which is a contradiction.
Let $q_0=y$, $q_1,q_2,\ldots,q_k$ $(k\ge1)$ be the vertices of $Q$ in the order from $y$ to $z'$ that are adjacent to at least two vertices in $V(P)$ and let $q_{k+1}=z'$.
By Claim~\ref{claim:Q}, for $i\in\{1,2,\ldots,k+1\}$, let $p_{i} \in V(P)$ be the vertex adjacent to $q_{i-1}$ and $q_i$.
Note that $P$ is a path from $p_1$ to $p_{k+1}=z$, and $yp_1 \in E(G)$.

For $j\in\{0,1,\ldots,k\}$, let
$Q_j$ be the subpath of $Q$ from $q_{j}$ to $q_{j+1}$.
For $i\in\{1,\ldots,k\}$,  let $P_i$ be the subpath of $P$ from $p_i$ to $p_{i+1}$.
Let $C_i$ be the cycle consisting of $P_i$ and vertex $q_i$, and let
 $G_i$ be the maximal plane subgraph of $G$ with boundary cycle  $C_i$.
 Let $(D_i,H_i)$ be a $(2,6)$-decomposition of $G_i$ with respect to $(p_{i+1},q_{i},p_{i})$.
Then, clearly $(p_{i+1},q_i), (p_i,q_i) $ are arcs of  $D_i$.
Modify $D_i$ and $H_i$ by reversing the orientation of $(p_i,q_i)$ in $D_i$, removing $(p_{i+1},q_i)$ from $D_i$, and then adding $p_{i+1}q_i$ to $H_i$.
Then for $i\in\{1,\ldots,k\}$, $D_i$ is still acyclic and
\begin{eqnarray*}
\deg_{D_i}^+(q_i)= \deg_{H_i}(q_i)=1, \deg_{D_i}^+(p_i)=\deg_{D_i}^+(p_{i+1})=0, \deg_{H_i}(p_i)\le 3, \text{ and } \deg_{H_i}(p_{i+1})\le 2.
\end{eqnarray*}
Let $(D',H')$ be a $(2,6)$-decomposition of $G'$ with respect to $(x,y,z')$.
Let
\begin{eqnarray*}
D&=&D'\cup \left(\bigcup_{i=1}^k D_i \right) +
\{(p_1,y)\}+ \{ (q,p_{i+1}) \mid q \in V(Q_i)\setminus \{q_i,q_{i+1}\},  i\in\{0,1,\ldots,k\}\}, \\
H&=&H'\cup \left(\bigcup_{i=1}^k H_i\right)+zz'.
\end{eqnarray*}
Suppose $\deg_{H'}(x)=1$. Then the vertex $s$ such that $sx \in E(H')$ belongs to $V(Q)\setminus \{y,z'\}$ by the case assumption. Delete $sx$ from $H$ and then add arc $(s,x)$ to $D$.
For the smallest index $i$ such that $s\in V(Q_i)$,
if $i\ge 1$, then modify $D$ by reversing the orientation of $(s,p_{i+1})$ in $D$, and if $i=0$, then modify $D$ and $H$ by deleting arc $(s,p_{i+1})$ from $D$ and then adding the edge $sp_{i+1}$ to $H$.
Clearly, $D$ is acyclic.
From the definition of $(D,H)$,  $N_{D}^+(x)=\{y\}$ and $\deg_{D}^+(y) = \deg_{H}(x)=\deg_{H}(y) = 0$, so
Condition (iii) holds.

For the vertex $s$ (if it exists), $\deg^+_{D}(s)\le 2$ and $\deg_{H}(s)\le 6$. For   $w\in V(Q)\setminus\{y,z',s\}$,
\begin{eqnarray*}
&& \deg_{D}^+(w)=\deg_{D'}^+(w)+1\le 2 \quad \text{ and }\quad \deg_{H}(w)\le \deg_{H'}^+(w)+1\le 6,
\end{eqnarray*}
and therefore Condition (i) holds. It is easy to check that
\begin{eqnarray*}
&&\deg_{D}^+(p_i) \le 1, \text{ for }i\in\{1,2,\ldots,k\},\\
&& \deg_H(p_1)\le 1+3=5-b(p_1),\quad
\deg_H(p_i)\le 3+2 = 5-b(p_i)\text{ for }i\in\{2,3,\ldots,k\},\\
&&\deg_{D}^+(z) \leq 1,  \quad \deg_H(z) \le 1+\deg_{H_k}(z)\le 1+2 = 3 < 4 = 4-b(z)\\
&&\deg_{D}^+(z') \le 1,  \quad
\deg_{H}(z')=\deg_{H'}(z')+1 \le 5-b(z'), \text{ and }\\
&&\deg_{H}(z'')=
\deg_{H_k}(z'')\le 5-b_{G_k,p_{k+1}q_k}(z'')=4 \le 4-b(z'') \  \text{if }z''\neq p_{k}.
\end{eqnarray*}
If $z''=p_k$, then the boundary cycle of $G_k$ is a triangle.
Thus, $\deg_{H_k}(p_k)\le 4-b_{G_k,p_{k+1}q_{k}}(p_k)=2$, which implies that
\[\deg_H(z'')=\deg_H(p_k)\le \begin{cases}
2+2 =4=4-b(z'') &\text{if }z''=p_k, k>1,\\
1+2=3=4-b(z'') &\text{if }z''=p_1,
\end{cases}\]
so
Conditions (ii) and (iv) hold.

It remains to check Condition (v). As shown above, whether $z''$ is $p_k$
or not, we have  $\deg_{H}(z'') \le 4-b(z'')$. Therefore
$$\deg_{H}(z) + \deg_{H}(z') + \deg_{H}(z'')\le
3+(5-b(z'))+(4-b(z'')) = 12-b(z')-b(z'').$$
\end{proof}

We finish this section by proving the following, which implies Proposition~\ref{prop:(2,3)}.

\begin{proposition}
Let $G$ be a plane triangulation on at least $11$ vertices.
If $G'$ is the plane graph obtained from $G$ by adding a new vertex $v_f$ to every face $f$ of $G$ and adding all edges between $v_f$ and the vertices of $f$, then $G'$ is not  $(2,3)$-decomposable.
\end{proposition}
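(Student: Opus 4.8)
The plan is to do a counting argument on the vertex degrees in a putative $(2,3)$-decomposition of $G'$. Write $H$ for the bounded-degree part and $D$ for the acyclic orientation of $G'-E(H)$ with $\Delta^+(D)\le 2$; so every vertex $w$ of $G'$ satisfies $\deg_{G'}(w)\le \deg_D^+(w)+\deg_D^-(w)$ is not the useful bound — rather, since $D$ orients $G'-E(H)$ acyclically with out-degree at most $2$, we get $|E(G')|-|E(H)|=\sum_w \deg_D^+(w)\le 2|V(G')|$, i.e. $|E(G')|\le 2|V(G')|+|E(H)|\le 2|V(G')|+\tfrac32|V(G')|$ since $\Delta(H)\le 3$. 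So the first step is simply to show $G'$ violates $|E(G')|\le \tfrac72|V(G')|$, or something close to it, perhaps after a small local improvement.

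Concretely: if $G$ is a plane triangulation on $n\ge 11$ vertices, it has $2n-4$ faces and $3n-6$ edges. Then $G'$ has $|V(G')|=n+(2n-4)=3n-4$ vertices, and $|E(G')|=(3n-6)+3(2n-4)=9n-18$ edges (the original edges plus three new edges per face). Now $9n-18$ versus $\tfrac72(3n-4)=\tfrac{21n-28}{2}$: we have $9n-18\le \tfrac{21n-28}{2}\iff 18n-36\le 21n-28\iff -8\le 3n$, which always holds, so the crude bound does not by itself give a contradiction. Hence the second and main step is to squeeze the estimate: the face-vertices $v_f$ have degree exactly $3$ in $G'$, and for such a vertex all three incident edges go to $f$'s triangle; the acyclicity and out-degree-$2$ constraint forces structure there, and moreover $H$ restricted to the $v_f$'s and their triangles cannot be too large. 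I would argue that, counting more carefully, the edges incident to face-vertices contribute less than the generic $2|V|+\tfrac32|V|$ allows — for instance, a vertex $v_f$ of degree $3$ contributes at most $2$ to $\sum\deg_D^+$ and at most $3$ to $\sum\deg_H$, but these cannot both be tight since $3>2+? $; more precisely if $\deg_H(v_f)=3$ then $v_f$ is isolated in $D$ and the three edges of $f$ all lie in $H$, which is very costly for the three original vertices of $f$, whose $H$-degree is then forced up.

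The cleanest route is probably a discharging/weighting argument: assign to each vertex $w$ the quantity $\deg_D^+(w)+\tfrac12\deg_H(w)$, which is at most $2+\tfrac32$ for interior-type vertices but the key point is that summed over all vertices it equals $|E(G')|-|E(H)|+\tfrac12|E(H)|$ wait — rather $\sum_w\deg_D^+(w)=|E(G')|-|E(H)|$ and $\sum_w\deg_H(w)=2|E(H)|$, so $\sum_w(\deg_D^+(w)+\tfrac12\deg_H(w))=|E(G')|=9n-18$. On the other hand I will bound this sum vertex-by-vertex: each face-vertex $v_f$ contributes at most $2+\tfrac32=\tfrac72$ trivially but I want to show the total over the $2n-4$ face-vertices plus the $n$ original vertices is strictly less than $9n-18$ when $n\ge 11$. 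Using $\deg_D^+(v_f)\le 2$ and $\deg_H(v_f)\le 3$ gives face-vertices at most $\tfrac72(2n-4)=7n-14$, and original vertices at most $\tfrac72 n$ is too weak; so the refinement must exploit that an original vertex $u$ of $G$ with $G'$-degree $d_{G'}(u)=2d_G(u)$ (each original neighbor gives one edge, and $u$ lies on $d_G(u)$ faces each contributing one more edge) has $\deg_D^+(u)\le 2$, hence $\deg_H(u)\ge d_{G'}(u)-\deg_D^-(u)-2$, and summing the $\deg_D^-$ over all original vertices is bounded by the number of non-$H$ edges, which circles back.

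I expect the main obstacle to be getting an honest strict inequality rather than the borderline one the crude count gives: the $n\ge 11$ hypothesis must enter somewhere, presumably via the fact that in a triangulation on $\ge 11$ vertices there are enough faces, or enough low-degree vertices, or that the average degree bound $3n-6$ forces many vertices of degree $\ge 6$ in $G$ and hence of degree $\ge 12$ in $G'$, each of which must dump at least $12-2-\deg_D^-$ edges into $H$, quickly exceeding $\Delta(H)\le 3$ unless they have large in-degree — but large in-degree everywhere is impossible by acyclicity. So the real heart is a global argument: the acyclic orientation means $\sum_w\deg_D^-(w)=\sum_w\deg_D^+(w)\le 2|V(G')|$, so the "average" in-degree is at most $2$ as well; combining $\deg_H(w)\ge \deg_{G'}(w)-\deg_D^+(w)-\deg_D^-(w)$ summed over all $w$ gives $2|E(H)|\ge 2|E(G')|-4|V(G')|$, i.e. $|E(H)|\ge |E(G')|-2|V(G')| = (9n-18)-2(3n-4)=3n-10$; but $\Delta(H)\le 3$ forces $|E(H)|\le \tfrac32|V(G')|=\tfrac32(3n-4)$, and $3n-10\le \tfrac{9n-12}{2}$ always holds, so even this is not enough — the contradiction must come from a localized version near high-degree original vertices or from the acyclicity giving a vertex of in-degree $0$. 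I would therefore look for a vertex $w$ in $G'$, existing when $n\ge 11$, with $\deg_{G'}(w)$ large and forced $\deg_D^-(w)$ small (e.g. a source of $D$, or a vertex early in the acyclic order with few back-edges), yielding $\deg_H(w)\ge \deg_{G'}(w)-2-0>3$, the sought contradiction; pinning down the existence of such a $w$ using only $n\ge 11$ is the step I anticipate will require the most care.
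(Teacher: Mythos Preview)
Your proposal does not reach a proof: you correctly observe that the global inequality $|E(G')|\le \tfrac72|V(G')|$ holds for all $n$, and your refined attempts (summing $\deg_D^+ +\tfrac12\deg_H$, or looking for a high-degree vertex that is a source of $D$) do not close the gap either. In particular, the source-of-$D$ idea fails because the source may well be a face-vertex $v_f$ of $G'$-degree $3$, and peeling off such vertices lets the original vertices accumulate large in-degree before they are reached, so no contradiction of the form $\deg_{G'}(w)>5$ arises. The threshold $n\ge 11$ never enters any of your estimates, which is a sign that the counting is not yet sharp enough.

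The paper's argument supplies two ideas you are missing. First, choose a $(2,3)$-decomposition $(D,H)$ of $G'$ that \emph{maximizes} $|E(H)\cap(E(G')\setminus E(G))|$; a short swap argument then shows $\deg_H(v_f)\ge 1$ for every face-vertex $v_f$ (if all three edges at $v_f$ lie in $D$, then in a $2$-degenerate ordering some vertex of $f$ comes after $v_f$ and after its two $f$-neighbours, forcing one of the triangle edges of $f$ into $H$, which can be swapped for an edge at $v_f$). Hence $|E(H)\setminus E(G)|\ge |F(G)|=2n-4$. Second, restrict everything to the original vertex set $V(G)$: the subgraph $D\cap G$ is still $2$-degenerate, so $|E(D)\cap E(G)|\le 2n-3$ and thus $|E(H)\cap E(G)|\ge (3n-6)-(2n-3)=n-3$. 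Now summing $\deg_H$ over $V(G)$ only (each $H$-edge in $E(G)$ counted twice, each $H$-edge from a face-vertex counted once) gives
\[
3n \ \ge\ \sum_{v\in V(G)}\deg_H(v)\ \ge\ 2(n-3)+(2n-4)=4n-10,
\]
i.e.\ $n\le 10$, the desired contradiction. The separation between original and face vertices, together with the extremal choice forcing $\deg_H(v_f)\ge 1$, is precisely what turns the borderline global count into a strict one.
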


\begin{proof}
Let $n=|V(G)|$. Since $G$ is a triangulation, $G$ has $2n-4$ faces and $3n-6$ edges. Suppose to the contrary that $G'$ is $(2,3)$-decomposable.
Let $(D,H)$ be a $(2,3)$-decomposition of $G'$ that  maximizes $|E(H)\cap (E(G')\setminus E(G))|$.
Let $\sigma$ be a $2$-degenerate ordering of $D$.

We claim that for every face $f$ of $G$, $\deg_H(v_f)\ge 1$.
Suppose $\deg_H(v_f)=0$ for some face $f$ of $G$. Let $v_1,v_2,v_3$ be the vertices of $G$ incident with $f$.
Then $\deg_D(v_f)=3$, so some $v_{j}$ comes later than $v_f$ in $\sigma$.
We may assume $v_{1}$ is the last in $\sigma$ among $\{v_f, v_{1}, v_{2}, v_{3}\}$.
Since $\sigma$ is a  $2$-degenerate ordering, either $v_{1}v_{2}$ or $v_{1}v_{3}$ is in $H$, say $v_{1}v_{2} \in E(H)$.
Let $D'=D-v_fv_{1}+v_{1}v_{2}$ and $H'=H-v_{1}v_{2}+v_fv_{1}$.
Then $(D',H')$ is a $(2,3)$-decomposition of $G'$, which is a contradiction to the maximality of $|E(H)\cap (E(G')\setminus E(G))|$. Therefore $\deg_H(v_f)\ge 1$ for every face $f$ of $G$, and thus $|E(H)\setminus E(G)|\ge |F(G)|=2n-4$.

In $\sum_{v \in V(G)} \deg_H(v)$, an edge in $E(H)\cap E(G)$ is counted twice and an edge in $E(H)\setminus E(G)$ is counted once.
Hence, together with the fact that $\Delta(H)\le 3$,
\[ 3n\ge \sum_{v \in V(G)} \deg_H(v) \ge 2|E(H)\cap E(G)|+|E(H)\setminus E(G)|\ge 2|E(H)\cap E(G)|+(2n-4).\]
From the fact that $|E(D)\cap E(G)|\le 2n-3$, we have
$|E(H)\cap E(G)|\ge (3n-6)-(2n-3)=n-3$, so
$3n \ge 2(n-3)+2n-4=4n-10$,
which is a contradiction since $n\ge 11$.
\end{proof}

\section{Proof of  (3,2)-decomposability}\label{sec:3Dplus2}

Note that for a near triangulation  and a boundary edge $xy$, there always exists a boundary vertex $z$ distinct from $x$, $y$ that is not incident with a chord of the boundary cycle.
Instead of proving   Theorem~\ref{thm:3Dplus2} directly, we prove the following more technical result.

\begin{theorem}\label{thm:32}
Let $G$ be a near triangulation, $xy$ be a boundary edge of $G$, and $z$ be a boundary vertex other than $x$, $y$ that is not incident with a chord of the boundary cycle.
When neither $x$ nor $y$ is a boundary neighbor of $z$, let $z'$ be a boundary neighbor of $z$.
Then there exist a subgraph $H$ and an acyclic orientation $D$ of $G-E(H)$ satisfying the following:
\begin{itemize}
\item[\rm{(i)}]  For every interior vertex $w$, $\deg_D^{+}(w)\le 3$ and $\deg_H(w)\le 2$.
\item[\rm{(ii)}] For every boundary vertex $w$, $\deg_D^{+}(w)\le 2$ and $\deg_H(w)\le 2$. Moreover, if $w \ne z'$,
then $\deg_D^{+}(w)+\deg_H(w) \le 3$.
\item[\rm{(iii)}] $\deg_D^+(y)=\deg_H(x)=\deg_H(y)=0$, $N_D^+(x)=\{y\}$, and $\deg_D^{+}(z)+\deg_H(z) \le 2$.
\end{itemize}
Let us call such $(D,H)$ a $(3,2)$-decomposition of $G$ with respect to $(x,y,z)$ or $(x,y,z,z')$.
\end{theorem}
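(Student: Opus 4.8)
# Proof Proposal for Theorem~\ref{thm:32}

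The plan is to mimic the structure of the proof of Theorem~\ref{thm:2Dplus6:main}: induct on $|V(G)|$, handle the base case $G=K_3$ by hand, and then split into the cases where the boundary cycle $C$ is a triangle, $C$ has a ``useful'' chord, and $C$ has no useful chord. The technical conditions (i)--(iii) are deliberately engineered so that the gluing operations along a chord or the vertex-deletion operations preserve them; the work is in verifying this bookkeeping. First I would record the analogue of Lemma~\ref{obs_realization}: if $(D,H)$ is a $(3,2)$-decomposition of $G$ with respect to $(x,y,z)$ (or $(x,y,z,z')$), then one with respect to $(y,x,z)$ exists, obtained by reversing the arc $(x,y)$ into $(y,x)$ (note $\deg_H(x)=\deg_H(y)=0$ here, so unlike the $(2,6)$ case there is no $H$-edge at $x$ to reroute — this direction is actually easier). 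This lets me assume symmetry between $x$ and $y$ throughout.

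For the base case and \textbf{Case 1} ($C=(x,y,z)$ a triangle), I delete $z$, apply induction to $G'=G-z$ with respect to $(x,y,w)$ for a suitable boundary vertex $w$ of $G'$ not incident with a chord, then reinsert $z$: orient all edges from the interior boundary vertices of $C'$ into $z$, keep one arc $(x,z)$ or $(y,z)$ (or put one such edge into $H$) so that $\deg_H(z)+\deg_D^+(z)\le 2$ holds, and check that $x,y$ still satisfy (iii). Because $z$ is a degree-bounded ``last'' vertex this is clean; the constraint $\deg_D^+(z)+\deg_H(z)\le 2$ with $z$ having two boundary neighbors $x,y$ forces $\deg_H(z)\le 1$, which is easily arranged. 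In \textbf{Case 2}, a chord $uv$ of $C$ splits $G$ into near triangulations $G_1\ni x,y$ and $G_2$; I apply induction to each (choosing the reference triple in $G_2$ as $(u,v,w^*)$ or $(v,u,w^*)$ depending on which of $u,v$ currently has slack at $\deg_D^++\deg_H$), set $D=D_1\cup(D_2-uv)$ and $H=H_1\cup H_2$, and verify that at the shared vertices $u,v$ the summed quantities $\deg_H(u)=\deg_{H_1}(u)+\deg_{H_2}(u)$ and so on stay within the bounds. The key point that makes this work is the strengthened inequality $\deg_D^+(w)+\deg_H(w)\le 3$ for boundary $w\ne z'$: it gives exactly the one unit of slack needed when $u$ or $v$ becomes interior or accrues an extra $H$-edge from $G_2$. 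The special vertex $z'$ (which has no such slack) must be placed on the $G_1$ side together with $z$, and one checks the chord cannot separate $z$ from $z'$ since $z$ is chord-free on the boundary.

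\textbf{Case 3} ($C$ has $\ge 4$ vertices and no chord separating $x,y,z$ or incident to them) is where the real difficulty lies, and I expect the subcase analysis here to be the main obstacle. When $z$ is a boundary neighbor of $x$ or $y$ (\textbf{Case 3-1}), I delete $z$ and apply induction to $G'=G-z$ with respect to $(x,y,z')$ where $z'$ is $z$'s other boundary neighbor, then reinsert $z$ orienting the interior-boundary vertices of the relevant path into $z$ and placing $zz'$ appropriately; the subtlety is again keeping $\deg_D^+(z)+\deg_H(z)\le 2$ while only one boundary neighbor ($x$ or $y$) remains. When neither $x$ nor $y$ neighbors $z$ (\textbf{Case 3-2}), I expect to need the ``peeling'' argument from the $(2,6)$ proof: let $P$ be the boundary path from the neighbor $p_1$ of $y$ clockwise to $z$, let $G'$ be the block of $G-V(P)$ containing $x,y,z'$ with $Q$ the clockwise boundary subpath from $y$ to $z'$, identify the vertices $q_1,\dots,q_k$ of $Q$ with $\ge 2$ neighbors on $P$, decompose each ``fan'' $G_i$ between consecutive $p_i$'s inductively with respect to $(p_{i+1},q_i,p_i)$, reverse/delete the arcs at $q_i$ to pass the $H$-load correctly, and finally apply induction to $G'$ with respect to $(x,y,z')$, orienting each $Q_i$-interior vertex into $p_{i+1}$. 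Here the reference vertex $z'$ must be carried as the ``no-slack'' fourth coordinate, and the whole point of allowing the $(x,y,z,z')$ form of the statement is to make this induction go through. The hard part will be checking that at the junction vertices $q_i$ and $p_i$ — which receive arcs and $H$-edges from both the fans and the outer reorientation — conditions (i)--(iii) all hold simultaneously, and in particular that $z'$ (if it coincides with some $q_i$ or lies on some $Q_i$) does not also need slack it has already spent. I would set up explicit degree tallies $\deg_D^+(p_i)\le1$, $\deg_D^+(p_i)+\deg_H(p_i)\le3$, $\deg_D^+(q_i)+\deg_H(q_i)\le2$, and $\deg_D^+(z)+\deg_H(z)\le2$ exactly as in the $(2,6)$ argument and push them through; I do not anticipate conceptual surprises, only careful casework on whether $z''$ (in the non-chord-free formulations) or $z'$ coincides with a $p_i$ or a $q_i$.
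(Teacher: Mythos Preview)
Your inductive framework and Cases~1--2 are broadly in the spirit of the paper, but your Case~3 diverges substantially, and the paper's route is both simpler and avoids a difficulty you have not addressed. The paper does \emph{not} import the path-peeling machinery from the $(2,6)$ proof. Instead, in Case~3 (where $C$ has no chord at all), it deletes only the single vertex $z$: letting $w$ be the boundary neighbor of $z$ with $w\notin\{x,y,z'\}$ and $w^*\in\{x,y,z'\}$ the other, it applies induction to $G'=G-z$ with respect to $(x,y,w,w')$ when $C'$ has no chord at $w$, and reattaches $z$ via $(u,z)$ for $u\in N_G(z)\setminus\{w,w^*\}$ plus a four-way case split on $(\deg_{H'}(w),\deg_{D'}^+(z'))$ to place the two remaining edges $zw,zw^*$. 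When $C'$ \emph{does} have a chord $wv$, necessarily $v\in N_G(z)$, so the triangle $zwv$ lies in $G$ and one splits $G$ along $\{z,w,v\}$ into two smaller near triangulations. No path $P$, no $q_i$'s, no fan decomposition.

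Your peeling approach is not merely heavier; it has a genuine gap. Theorem~\ref{thm:32}, unlike Theorem~\ref{thm:2Dplus6:main}, carries the extra hypothesis that the special vertex $z$ is \emph{not incident with a chord of the boundary cycle}, and every inductive call must verify this. In your Case~3-1 you invoke induction on $G-z$ with respect to $(x,y,z')$, but nothing prevents $z'$ from lying on a chord of the boundary of $G-z$. In your Case~3-2 you invoke induction on each fan $G_i$ with respect to $(p_{i+1},q_i,p_i)$ and on $G'$ with respect to $(x,y,z')$; chord-freeness of $p_i$ in $G_i$ and of $z'$ in $G'$ is not automatic and you do not argue it. The paper's single-vertex deletion is engineered precisely to control this: it inducts on the neighbor $w$ (not $z'$), and the subcase where $w$ acquires a chord is exactly what triggers the further split. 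A minor slip as well: in your Case~1 the arcs at $z$ must be $(z,x),(z,y)$ outgoing from $z$ with $H=H'$; you cannot take $(x,z)$, $(y,z)$, or an $H$-edge at $x$ or $y$, since condition~(iii) forces $\deg_D^+(y)=\deg_H(x)=\deg_H(y)=0$ and $N_D^+(x)=\{y\}$.
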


\begin{proof}
We use induction on $|V(G)|$. 
If $|V(G)|= 3$, then $G=K_3$.
Let $D$ be a digraph with arcs $(x,y)$, $(z,x)$ and  $(z,y)$,  and $H$ be the empty graph.
Then $(D,H)$ is a $(3,2)$-decomposition of $G$ with respect to $(x,y,z)$. Suppose $|V(G)|\ge 4$.
Let $C$ be the boundary cycle of $G$.

\medskip

\noindent {\bf Case 1}   $C =(x,y,z)$ is a triangle.

Let $G'=G-z$.
Note that $G'$ is a near triangulation and  let $C'$ be  the boundary cycle of $G'$. Let $w\in N_G(z) \setminus \{x,y\}$ such that $w$ is not incident with a chord of $C'$. By the induction hypothesis, there is a $(3,2)$-decomposition $(D',H')$ of $G'$ with respect to $(x,y,w,w')$ or $(x,y,w)$ depending on the existence of $w'$.
Then $(D,H)$, where $D=D'+\{(z,y),(z,x)\}+\{ (u,z)\mid u\in V(C')\setminus\{x,y\} \}$ and $H=H'$, satisfies Conditions (i)-(iii).

\medskip

\noindent {\bf Case 2}  $C$ has a chord $uv$.

\begin{figure}[h!]
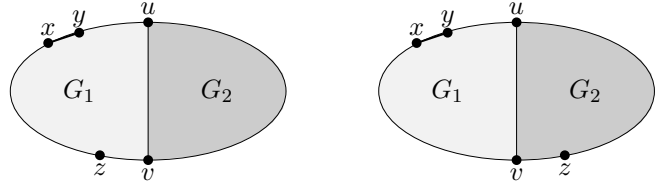

\centering   \includegraphics[width=4cm,page=9]{fig-degenerate-combine.pdf}
  \qquad \includegraphics[width=4cm,page=10]{fig-degenerate-combine.pdf}\\
   \caption{Illustrations for \textbf{Case 2}}
  \label{fig:(3,2)_triangle}
\end{figure}

\noindent {\bf Case 2-1} There is a chord $uv$ of $C$ such that $x,y,z \in V(G_i)$ for some $i \in \{1,2\}$, where $G_1$ and $G_2$ are the plane subgraphs of $G$ separated by $uv$.

Let $C_i$ be the boundary cycle of $G_i$. Without loss of generality, assume $x,y,z \in V(G_1)$.
See the first figure of  Figure~\ref{fig:(3,2)_triangle}. 
Choose the chord $uv$ so that $G_2$ is minimum, so $C_2$ has no chord.
Note that $z \not\in\{u, v\}$,  since $z$ is not incident with a chord of $C$.
Therefore, $z' \in V(G_1)$ if neither $x$ nor $y$ is a boundary neighbor of $z$ in $G$.
By the induction hypothesis, there is a $(3,2)$-decomposition $(D_1,H_1)$ of $G_1$ with respect to $(x,y,z,z')$ or $(x,y,z)$ depending on the existence of  $z'$.
Let $z''$ be a boundary neighbor of $v$ in $G_2$ other than $u$.
By the induction hypothesis, there is
 a $(3,2)$-decomposition $(D_2,H_2)$ of $G_2$ with respect to $(u,v,z'')$.
Note that $z''$ is not incident with a chord of $C_2$ since it has no chord.
Let $D=D_1+(D_2-uv)$ and $H=H_1+(H_2-uv)$.
Since $N_{D_2}^+(u)=\{v\}$, $\deg_{D_2}^+(v)=\deg_{H_2}(u)=\deg_{H_2}(v)=0$ by Condition (iii) for $(D_2,H_2)$, it follows that $D$ is acyclic and Conditions (i)-(iii) are  easily verified.

\smallskip

\noindent {\bf Case 2-2} For every chord $uv$ of $C$, $x,y \in V(G_1)$ and $z \in V(G_2)\setminus V(G_1)$,
where $G_1$ and $G_2$ are the plane subgraphs of $G$ separated by $uv$. See the second figure of  Figure~\ref{fig:(3,2)_triangle}.

Let $C_i$ be the boundary cycle of $G_i$. 
Choose the chord $uv$ so that $G_1$ is minimum, so $C_1$ has no chord.
By the induction hypothesis, there is a $(3,2)$-decomposition $(D_1,H_1)$ of $G_1$ with respect to $(x,y,w)$  where $w$ is a boundary vertex of $G_1$ so that $wx$ is a boundary edge of $G_1$.
Note that $w$ is not incident with a chord of $C_1$.

If either $zu$ or $zv$ is a boundary edge of $G$, then there exists a $(3,2)$-decomposition $(D_2,H_2)$ of $G_2$ with respect to $(u,v,z)$ by the induction hypothesis.
If $z$ is neither adjacent to $u$ nor $v$, then $z' \in V(G_2) \setminus \{u,v\}$, so let $(D_2, H_2)$ be a $(3,2)$-decomposition  of $G_2$ with respect to $(u,v,z,z')$.
Let  $D=D_1+(D_2-uv)$ and $H=H_1+(H_2-uv)$.
Since $N_{D_2}^+(u)=\{v\}$, $\deg_{D_2}^+(v)=\deg_{H_2}(u)=\deg_{H_2}(v)=0$ by Condition (iii) for $(D_2,H_2)$, it follows that $D$ is acyclic and Conditions (i)-(iii) are also easily verified.

\medskip

\noindent {\bf Case 3}   $C$ is not a triangle and has no chord.

\begin{figure}[h!]
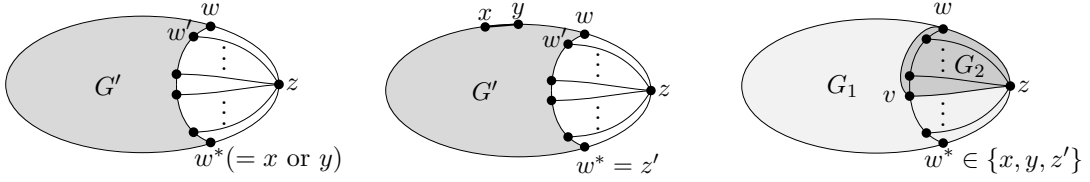

\centering
  \includegraphics[height=2.8cm,page=11]{fig-degenerate-combine.pdf}
  \includegraphics[height=2.8cm,page=12]{fig-degenerate-combine.pdf}
  \quad
    \includegraphics[height=2.8cm,page=13]{fig-degenerate-combine.pdf}
   \caption{Illustrations for \textbf{Case 3}} \label{fig:nochord_triangle}
\end{figure}

Let $zw$ be the boundary edge of $G$ where $w\not\in\{x,y,z'\}$, and let $w^*$ be the other boundary neighbor of $z$ in $G$. Note that $w^*\in \{x,y,z'\}$.
For simplicity, let $U=N_G(z)\setminus\{w,w^*\}$.
Let $G'=G-z$.
Note that $G'$ is a near triangulation, and let $C'$ be the boundary cycle of $G'$.
Let $w'$ be the interior vertex of $G$ which is a boundary neighbor of $w$ in  $G'$.
(Such $w'$ exists,  since  $G$ has no chord and so $\deg_G(z)\ge 3$.)

\smallskip

\noindent {\bf Case 3-1} $C'$ has no chord at the vertex $w$.

We find a (3,2)-decomposition $(D',H')$ of $G'$ with respect to $(x,y,w,w')$
(if $y$ or $x$ is a boundary neighbor of $w$ in $G$, then we do not consider $w'$) by the induction hypothesis.
Note that $\deg^+_{D'}(w)+\deg_{H'}(w)\le 2$.
Let $\tilde{D}=D'+\{ (u,z) \mid u\in U\}$ for simplicity.

Suppose $w^*\in\{x, y\}$.
See the first figure of Figure \ref{fig:nochord_triangle}.
If $\deg_{H'}(w)\le 1$, then let  $D=\tilde{D}+(z,w^*)$ and $H=H'+zw$.
If $\deg_{H'}(w)= 2$, then let $D=\tilde{D}+\{(w,z),(z,w^*)\}$ and $H=H'$.
Since $\deg_{D'}^+(y)=0$ and $N^+_{D'}(x)=\{y\}$,
it follows that $D$ is acyclic.
Moreover, $\deg_{H'}(w)= 2$ implies
$\deg_{D'}^+(w)=0$, so Conditions (i)-(iii) are verified.

Suppose $w^*=z'$.
See the second figure of Figure \ref{fig:nochord_triangle}.
We divide into four cases according to $\deg_{H'}(w)$ and
$\deg_{D'}^+(z')$.
Note that $\deg_{H'}(w)=2$  implies $\deg_{D'}^+(w)=0$.
\begin{itemize}
\item If $\deg_{H'}(w)\le 1$ and $\deg_{D'}^+(z') \le 1$, then
let $D=\tilde{D}+\{(z',z)\}$ and $H=H'+zw$.
\item If $\deg_{H'}(w)=2$ and $\deg_{D'}^+(z') \le 1$, then
let $D=\tilde{D}+\{(z',z),(w,z)\}$ and $H=H'$.
\item If $\deg_{H'}(w)\le 1$ and $\deg_{D'}^+(z')=2$, then
let $D=\tilde{D}$ and $H=H'+\{zw,zz'\}$.
\item If $\deg_{H'}(w)=2$ and $\deg_{D'}^+(z') =2$, then
let $D=\tilde{D}+\{(w,z)\}$ and $H=H'+zz'$.
\end{itemize}
Clearly, the resulting digraph $D$ is acyclic.
It is also easy to check Conditions (i) and (iii).
By Condition (ii) for $(D',H')$, we have $\deg_{D'}^+(z')+\deg_{H'}(z')\le 3$, so
Condition (ii)  is also satisfied.

\smallskip

\noindent {\bf Case 3-2} $C'$ has a chord $wv$.

Since there is no chord of $C$ by the case assumption,
$v \in N_G(z) \setminus \{w^*,w'\}$.
Then $G-\{z,w,v\}$ has two components $V_1$ and $V_2$.
Let $G_i =G[V_i \cup \{z,w,v\}]$ for each $i$, and assume $x,y \in V(G_1)$.
Note that each $G_i$ is a near triangulation.
See the last figure of Figure~\ref{fig:nochord_triangle}.
By the induction hypothesis, there is a $(3,2)$-decomposition $(D_1,H_1)$ of $G_1$ with respect to $(x,y,z,z')$ (if either $zy$ or $zx$ is a boundary edge of $G_1$ (or $G$), then we do not consider $z'$).
By the induction hypothesis, there is a $(3,2)$-decomposition $(D_2,H_2)$ of $G_2$ with respect to $(w,v,z)$.
Let $D=D_1+(D_2-\{zw,vz,vw\})$ and $H=H_1+H_2$.

By Condition (iii) for $(D_2,H_2)$,  $(s,t)$ is an arc of $D_2$, for every edge $st$ of $G$ joining an interior vertex $s$ of $G_2$ and a boundary vertex $t$ of $G_2$.
Hence, $D$ is acyclic and Conditions (i)-(iii) are easily verified.
\end{proof}

\section{Proof of $(4, 1)$-decomposability}
\label{sec:4DplusM}

A {\it $d$-vertex}, a {\it $d^+$-vertex}, and a {\it $d^-$-vertex} are a vertex of degree $d$, at least $d$, and at most $d$, respectively.
A {\it $d$-neighbor} is a neighbor that is a $d$-vertex.
A {\it $d^+$-neighbor} and a {\it $d^-$-neighbor} are defined analogously.
Note that even though a matching is a collection of edges, we sometimes refer to it as a subgraph with maximum degree one.

Let $G$ be a minimum counterexample to Theorem~\ref{thm:4DplusM} with respect to the number of vertices.
We may assume that $G$ is a triangulation, and fix an embedding of $G$.
The following lemma reveals some reducible configurations of $G$.

\begin{lemma}\label{Lemma:M+4D}
The following structures cannot appear in $G$:
\begin{itemize}
\item[\rm(i)] A $4^-$-vertex.
\item[\rm(ii)] Two adjacent $5$-vertices.
\item[\rm(iii)] A $5$-vertex with three consecutive $6^-$-neighbors.
\item[\rm(iv)] A $5$-vertex with two $7$-neighbors and  three $6^-$-neighbors.
\item[\rm(v)] A $7$-vertex with three consecutive $6^-$-neighbors where two of them are $5$-vertices.
\end{itemize}
\end{lemma}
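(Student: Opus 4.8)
The overall strategy is the standard reducibility argument for a minimum counterexample: for each configuration, assume it appears in $G$, delete or modify a small piece to obtain a smaller planar graph $G'$, invoke minimality to get a $(4,1)$-decomposition $(D',H')$ of $G'$ (where $H'$ is a matching and $D'$ is a $4$-degenerate graph, which I will think of via a $4$-degenerate vertex ordering or an acyclic orientation with $\Delta^+\le 4$), and then extend it back to a $(4,1)$-decomposition of $G$, contradicting the choice of $G$. Part~(i) is immediate: if $v$ is a $4^-$-vertex, take a $(4,1)$-decomposition of $G-v$, put all edges at $v$ into the degenerate part, and place $v$ last in the degenerate ordering; since $\deg_G(v)\le 4$ this keeps the out-degree (back-degree) bound, and $H'$ is untouched. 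So the real content is (ii)--(v), all concerning a $5$-vertex or a $7$-vertex with a prescribed neighborhood pattern.

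For (ii)--(v) the plan is the same template, with the arithmetic being the only thing that changes. Let $v$ be the $5$-vertex (or $7$-vertex) in question with neighbors $u_1,\dots,u_k$ listed cyclically around $v$ (so consecutive $u_i$'s are adjacent, as $G$ is a triangulation). Delete $v$ to get $G'=G-v$, which is planar with fewer vertices; since $v$ is not a $4^-$-vertex by (i), $G'$ need not be a triangulation but that is fine. Take a $(4,1)$-decomposition $(D',H')$ of $G'$. The goal is to re-insert $v$: choose a subset $S$ of its incident edges to go into the matching $H$ and the rest into $D$, with $v$ placed at the end of the degenerate order. Two constraints must be met: (a) at most $4$ edges of $v$ go into the degenerate part, i.e.\ $|S|\ge k-4$, so $|S|\ge 1$ for a $5$-vertex and $|S|\ge 3$ for a $7$-vertex; (b) each edge $vu_i\in S$ must be addable to the matching, i.e.\ $u_i$ must currently be $H'$-unsaturated. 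If some neighbor $u_i$ is $H'$-saturated by an edge $u_iu_j$ with $u_j$ also a neighbor of $v$ (which is exactly the kind of local structure forced by the "consecutive $6^-$-neighbors" hypotheses, after possibly also using (ii) to rule out adjacent $5$-vertices), we can \emph{rotate}: move $u_iu_j$ out of $H'$ into $D$ and move some $vu_\ell$ into $H$ instead. The degree hypotheses in (iii)--(v) are precisely what is needed to guarantee that, after all such rotations, enough neighbors of $v$ are simultaneously matchable to $v$; the bound "$6^-$-neighbor" gives a vertex of degree $\le 6$, so after deleting $v$ it has degree $\le 5$, leaving room to absorb a rotated matching edge into its $4$ degenerate slots.

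The main obstacle is the bookkeeping in (iv) and (v), where several neighbors may be $H'$-saturated at once and the rotations interact: moving a matching edge $u_iu_j$ into the degenerate part raises the back-degree of whichever of $u_i,u_j$ comes later in the ordering, and one must check this does not push it over $4$, and also that after reinserting $v$ (which comes last) none of the matching edges newly incident to neighbors of $v$ conflict with each other. The "three consecutive $6^-$-neighbors, two of them $5$-vertices" condition in (v) is designed so that the $5$-vertices — which by (ii) have no other $5$-neighbor and by (iii) cannot themselves sit in a bad triple — are flexible enough to be reassigned; I expect the proof to fix an explicit small case analysis on which neighbors of $v$ are saturated and by which edges, using planarity (the cyclic order of $u_1,\dots,u_k$) to limit the possibilities. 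I would organize it as: first establish (i) and (ii) as warm-ups, then prove (iii), and finally handle (iv) and (v) by reducing, via (ii) and (iii), to a bounded number of local configurations each dispatched by one rotation-and-reinsert step.
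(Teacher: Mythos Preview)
Your plan breaks down at~(v), and the error is not just bookkeeping. You propose to delete only $v$ (the $7$-vertex), reinsert it last in the degenerate order, and put a set $S$ of its incident edges into $H$ with $|S|\ge k-4=3$. But $H$ is a \emph{matching}: all edges of $S$ meet $v$, so $|S|\le 1$. Hence at least six edges at $v$ land in $D$, and with $v$ last its back-degree is at least~$6$. No amount of rotating matching edges among the $u_i$ can fix this, because the obstruction is at $v$ itself. To handle a $7$-vertex you must allow some neighbours of $v$ to come \emph{after} $v$ in the ordering, which forces you to delete (and later reinsert) those neighbours as well --- deleting $v$ alone is not enough.

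The same miscount bites in~(iii)--(iv). A $6^-$-neighbour $u_i$ has degree $\le 5$ in $G-v$, so if you rotate its matching edge $u_iu_j$ into $D$ then $u_i$ has up to five $D$-edges; placing it last gives back-degree~$5$, not~$4$. Your sentence ``degree $\le 5$, leaving room to absorb a rotated matching edge into its $4$ degenerate slots'' is off by one. The paper avoids all of this by deleting the whole configuration at once: for (ii) delete $\{u,v\}$; for (iii) and (v) delete $\{v,u_1,u_2,u_3\}$; for (iv) delete $N_G[v]$. Because none of the deleted vertices appears in $G'$, they are automatically $H'$-unsaturated, so one may \emph{freely} pick a matching on them (for instance $\{vu_1,\,u_2u_3\}$ in (iii) and (v)) and then append the deleted vertices in an explicit order so that each sees at most four earlier $D$-neighbours. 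The degree hypotheses are used only to bound the number of neighbours each deleted vertex has \emph{outside} the deleted set; there is no rotation and no case analysis on $H'$ at all.
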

\begin{proof}
In all cases, we will obtain a $(4,1)$-decomposition of $G$, which is a contradiction.

(i) Suppose to the contrary that there is a $4^-$-vertex $v$.
By the minimality of $G$, $G-v$ has a $(4,1)$-decomposition $(D',M')$  with a $4$-degenerate ordering $\sigma'$ of $D'$.
Let $M=M'$, and let $D$ be the graph from $D'$ by adding all edges incident to $v$.
Clearly, $M$ is a matching and the ordering obtained by appending $v$ to $\sigma'$ is a $4$-degenerate ordering of $D$, so $D$ is $4$-degenerate.

(ii) Suppose to the contrary that there are two adjacent $5$-vertices $u$ and $v$. 
By the minimality of $G$, $G-\{u,v\}$ has a $(4,1)$-decomposition $(D',M')$
with a $4$-degenerate ordering $\sigma'$ of $D'$.
Let $M=M'\cup\{uv\}$ and let $D=G-M$. Clearly, $M$ is a matching and the ordering obtained by appending
$v,u$ to $\sigma'$ is a $4$-degenerate ordering of $D$, so $D$ is $4$-degenerate.

(iii) Suppose to the contrary that there is a $5$-vertex $v$ with three $6^-$-neighbors $u_1$, $u_2$, $u_3$, and  $u_1u_2, u_2u_3\in E(G)$.
By the minimality of $G$, $G-\{v,u_1,u_2,u_3\}$ has a $(4,1)$-decomposition $(D',M')$ with a
$4$-degenerate ordering $\sigma'$ of $D'$.
Let $M=M'\cup \{vu_1,u_2u_3\}$ and let $D=G-M$.
Clearly, $M$ is a matching, and the ordering obtained by appending $u_3,u_1,u_2,v$ to $\sigma'$ is a $4$-degenerate ordering of $D$, so $D$ is $4$-degenerate.

(iv) Suppose to the contrary that there is a $5$-vertex $v$ with three $6^-$-neighbors and two $7$-neighbors.
Let $N_{G}(v)=\{u_1,u_2,u_3,u_4,u_5\}$ where $u_1u_5\in E(G)$ and $u_iu_{i+1}\in E(G)$ for $i\in\{1,2,3,4\}$.

By (ii) and (iii), we may assume that $u_1$, $u_2$, $u_4$ are the $6$-vertices, and $u_3$ and $u_5$ are the $7$-vertices.
By the minimality of $G$, $G-N_G[v]$ has a $(4,1)$-decomposition $(D',M')$ with
a $4$-degenerate ordering $\sigma'$ of $D'$.
Let $M=M'\cup \{vu_5,u_1u_2,u_3u_4\}$ and let $D=G-M$.
Clearly, $M$ is a matching, and the ordering obtained by appending  $u_5,u_1,u_3,u_2,u_4,v$ to $\sigma'$ is a $4$-degenerate ordering of $D$, so $D$ is $4$-degenerate.

(v) Suppose to the contrary that there is a $7$-vertex $v$ with three consecutive neighbors $u_1,u_2,u_3$ where two of them are $5$-vertices. By (ii), $u_1$, $u_3$ are $5$-vertices and $u_2$ is a $6$-vertex.
By the minimality of $G$, $G-\{v,u_1,u_2,u_3\}$ has a $(4,1)$-decomposition $(D',M')$
with a $4$-degenerate ordering $\sigma'$ of $D'$.
Let $M=M'\cup \{vu_1,  u_2u_3\}$ and let $D=G-M$.
Clearly, $M$ is a matching, and the
 ordering obtained by appending  $v,u_2,u_3,u_1$ to $\sigma'$ is a $4$-degenerate ordering of $D$, so $D$ is $4$-degenerate.
\end{proof}

We use the discharging method to reach the final contradiction, to conclude that the minimum counterexample $G$ could not have existed.
By Euler's formula, recall that \[\sum_{v\in V(G)} (\deg_G(v)-6)+\sum_{f\in F(G)}(2\deg_G(f)-6)=-12.\]
Since $\deg_G(f)\ge 3$ for every face $f\in F(G)$, we know \[\sum_{v\in V(G)} (\deg_G(v)-6)\le -12.\]
Let the initial charge of each vertex $v$ be $\deg_G(v)-6$, and note that the initial charge sum is negative.
We will reach a contradiction by showing that the final charge at each vertex is non-negative after the discharging rules, which preserves the charge sum.
The following is our one discharging rule:
\begin{itemize}
    \item[\textbf{[R]}] Each $6^+$-vertex $v$ sends charge $(\deg_G(v)-6)/d_5(v)$ to each of its $5$-neighbors, where $d_5(v)$ is the number of $5$-neighbors of $v$.
\end{itemize}

By Lemma~\ref{Lemma:M+4D}~(ii),    $d_5(v) \le \left\lfloor\frac{\deg_G(v)}{2}\right\rfloor$.
Thus, an $8$-vertex and a $7$-vertex send charge at least $\frac{1}{2}$ and at least $\frac{1}{3}$, respectively, to each $5$-neighbor.

By the rule [\textbf{R}], the final charge of a $6^+$-vertex is non-negative.
By Lemma~\ref{Lemma:M+4D} (i), it remains to check $5$-vertices.
Take a $5$-vertex $v$, and let $N_{G}(v)=\{u_1,u_2,u_3,u_4,u_5\}$ where $u_1u_5\in E(G)$ and $u_iu_{i+1}\in E(G)$ for $i\in\{1,2,3,4\}$.
If $v$ has at least two $8^+$-neighbors, then the final charge of $v$ is non-negative.
If $v$ has no $8^+$-neighbors, then by Lemma~\ref{Lemma:M+4D}~(iii) and (iv), it has at least three $7$-neighbors, and the final charge of $v$ is non-negative.

Assume $v$ has exactly one $8^+$-neighbor $u_5$.
If $v$ has at least two $7^+$-neighbors other than $u_5$, then it has non-negative final charge.
If $v$ has no $7^+$-neighbor other than $u_5$, then this is a contradiction to Lemma~\ref{Lemma:M+4D} (iii).
Thus, $v$ has exactly one $7$-neighbor, so it has three $6^-$-neighbors.
By Lemma~\ref{Lemma:M+4D}~(iii), we may assume that $u_3$ is the $7$-neighbor and $u_1,u_2,u_4$ are the $6^-$-neighbors.
By Lemma~\ref{Lemma:M+4D}~(ii) and (v), the $7$-vertex $u_3$ has at most two $5$-neighbors.
Thus $u_3$ sends charge at least $\frac{1}{2}$ to $v$ by the rule [\textbf{R}].
Since $u_5$ sends charge at least $\frac{1}{2}$ to $v$ by the rule [\textbf{R}], the  final charge of $v$ is non-negative.

\section*{Acknowledgements}
\small{This work has started during the 5th Korean Early Career Researcher Workshop in Combinatorics.}

\small{Ilkyoo Choi was supported by the Basic Science Research Program through the National Research Foundation of Korea funded by the Ministry of Education (No. NRF-2018R1D1A1B07043049), and also by the Hankuk University of Foreign Studies Research Fund.
Ringi Kim was supported by the National Research Foundation of Korea grant funded by the Korea government (No. NRF-2018R1C1B6003786), and also by Basic Science Research Program through the National Research Foundation of Korea funded by the Ministry of Education (No. NRF-2019R1A6A1A10073887).
Boram Park was supported by the National Research Foundation of Korea grant funded by the Korea government (No. NRF-2018R1C1B6003577).
Xuding Zhu was supported by NSFC 11971438 and 111 project of Ministry of Education of China.}

\bibliography{ref}{}
\bibliographystyle{plain}

\end{document}